\newtheorem{theorem}{Theorem}[section]
\newtheorem{lemma}[theorem]{Lemma}
\theoremstyle{definition}
\newtheorem{definition}[theorem]{Definition}
\theoremstyle{remark}
\newtheorem{remark}[theorem]{Remark}
\numberwithin{equation}{section}
\numberwithin{equation}{section}
\renewcommand{\d}{\:\! \mathrm{d}}
\journal{'Complex Variables and Elliptic Equations'}
\begin{document}

\begin{frontmatter}



 \title{Infinitely many small solutions to an elliptic PDE of variable exponent with a singular nonlinearity}
 \author{Sekhar Ghosh\fnref{label2}}
 \ead{sekharghosh1234@gmail.com}
 \author{Debajyoti Choudhuri\fnref{label2}}
 \ead{dc.iit12@gmail.com}
 \author{Ratan Kr. Giri\corref{cor1}\fnref{label3}}
%
 \cortext[cor1]{Corresponding author: {\sffamily giri90ratan@gmail.com/giri@campus.technion.ac.il} (R.Kr. Giri)}
 \fntext[label2]{Department of Mathematics, National Institute of Technology Rourkela, India}
\fntext[label3]{Department of Mathematics, Technion - Israel Institute of Technology, Haifa, Israel}




\begin{abstract}
We prove the existence of infinitely many nonnegative solutions to the following nonlocal elliptic partial differential equation involving singularities
\begin{align}
		(-\Delta)_{p(\cdot)}^{s} u&=\frac{\lambda}{|u|^{\gamma(x)-1}u}+f(x,u)~\text{in}~\Omega,\nonumber\\
	u&=0~\text{in}~\mathbb{R}^N\setminus\Omega,\nonumber
\end{align}
where $\Omega\subset\mathbb{R}^N,\, N\geq2$ is a smooth, bounded domain, $\lambda>0$, $s\in (0,1)$, $\gamma(x)\in(0,1)$ for all $x\in\bar{\Omega}$, $N>sp(x,y)$ for all $(x,y)\in\bar{\Omega}\times\bar{\Omega}$ and $(-\Delta)_{p(\cdot)}^{s}$ is the fractional $p(\cdot)$-Laplacian operator with variable exponent. The nonlinear function $f$ satisfies certain growth conditions. Moreover, we establish a uniform $L^{\infty}(\bar{\Omega})$ estimate of the solution(s) by the Moser iteration technique.
\end{abstract}

\begin{keyword}
Fractional  $p(\cdot)$-Laplacian, Variable Order Fractional Sobolev Space, Genus, Symmetric Mountain Pass Theorem, Singularity.
 \MSC[2020] 35R11 \sep 35J60 \sep 35J75 \sep 46E35.

\end{keyword}

\end{frontmatter}


\section{Introduction}
\noindent For many a generation of researchers in the field of analysis of elliptic Partial Differential Equations (PDEs) the finding due to \citet{Ambrosetti1973} still a cornerstone in the study of infinitely many solutions. The authors in \cite{Ambrosetti1973} considered the Laplacian operator ($-\Delta$) involving a superlinear data $f$ with the Dirichlet boundary condition in a bounded, Lipschitz domain $\Omega\subset\mathbb{R}^N$ and have used the symmetric mountain pass theorem due to Clark \cite{Clark1972} to conclude the existence. Mathematically the problem addressed in \citet{Ambrosetti1973} is as follows. 
\begin{align*}
\begin{split}
Lu\equiv \Sigma_{i,j=1}^{N}\frac{\partial }{\partial x_j}\left(a_{ij}(x)\frac{\partial }{\partial x_j}u\right)&=p(x,u)~\text{for}~x\in\Omega ~\\
u|_{\partial\Omega}&=0.
\end{split}
\end{align*}
This led to a chain of works that can be found in \cite{Kajikiya2005,Zhou2014,Gu2018,Gu2018a} and the references therein. Meanwhile, another noteworthy development took place due to \citet{Lazer1991} in studying elliptic partial differential equations involving a singularity. The problem they considered is 
\begin{align*}
\begin{split}
-\Delta u &=p(x)u^{-\gamma}~\text{for}~x\in\Omega, ~\\
u&>0~\text{for}~x\in\Omega, ~\\
u|_{\partial\Omega}&=0,
\end{split}
\end{align*}
where $p(x)>0$ in $\overline{\Omega}$ and $\Omega$ is a bounded domain of sufficiently smooth boundary. PDEs involving singularity further gained popularity amongst the researchers with the advent of nonlocal/local operators as well, viz. \cite{Canino2017,Boccardo2009}. Off late the study of existence and multiplicity (finitely many) of solutions have been explored widely for both the local and non local operators. Due to the vast amount of literature it is almost impossible to assimilate all of them here. What we can do is to direct the reader's attention to \cite{Giacomoni2007,Giacomoni2009,Saoudi2019,Mukherjee2016,Saoudi2017, Servadei2012,Servadei2013} and the references therein. The work due to \cite{Ghosh2019} is the first of its kind that guarantees the existence of infinitely many positive solutions to a fractional Laplacian problem involving both a singularity and a power nonlinearity with constant exponent. The authors in \cite{Ghosh2019} considered the following problem.
\begin{align*}
\begin{split}
(-\Delta)^s u &=\frac{\lambda}{u^{\gamma}}+f(x,u) ~\text{for}~x\in\Omega,~\\
u&>0~\text{for}~x\in\Omega, ~\\
u &=0~\text{for}~x\in\mathbb{R}^N\setminus\Omega,
\end{split}
\end{align*}
where $\Omega$ is a bounded domain in $\mathbb{R}^N$ with a Lipshitz boundary. 
 Recently, PDEs involving operators and nonlinearities of variable orders and exponents respectively took the researchers in a new direction. It is worth mentioning that the literature for the variable order fractional $p(x)$-Laplacian is meager even for the case $p(x)=p$, $1<p<\infty$. In \citet{Fan2007} the author has introduced with the sub-super solution method for the $p(x)$-Laplacian equations. The equivalence of the local minimizers in the $C^1$ topology and the local minimizers in the $W^{1,p(x)}$ topology has been obtained. Further, the author obtained two solutions for an eigenvalue type problem.
 In \cite{Chabrowski2005} the authors have applied the mountain pass theorem to obtain the existence of solutions to the following $p(x)$-Laplacian problem.
 \begin{eqnarray}\label{chab}-div(a(x)|\nabla u|^{p(x)-2}\nabla u)+b(x)|u|^{p(x)-2}u&=&f(x,u)~ \text{in}~ \Omega,\nonumber\\
 u&=&0~ \text{on}~\partial\Omega, 
 \end{eqnarray}
 where $1<p^-\leq p(x)\leq p^+<N$, $\Omega$ is a bounded domain. Here $f$ could be a superlinear or a sublinear function. The readers may also refer to \cite{Fan2003,Bonanno2012} and the references therein for a study of infinitely many solutiuons. It will be a good read to refer \cite{Ji2012} where no Ambrosetti-Rabinowitz type condition has been assumed to obtain multiple solutions to the $p(x)$-Laplacian equations. Some other noteworthy results are due to \cite{Radulescu2015,Tan2012,Bin2014,Zang2008} and the references therein. One may also refer to  \cite{Gao2010,Mihailescu2007,Chung2013,Yao2008} where in the authors have considered a problem with $p(x)$-Laplacian  operator similar to \eqref{chab} with concave-convex nonlinearities.\\ 
 Off late, the author in \cite{Bahrouni2018} studied the new fractional Sobolev space $W^{s,q(x),p(x,y)}(\Omega)$, for $s\in(0,1)$, and the  fractional $p(x)$-Laplace operator. Existence of a solution has been guaranteed using a sub-super solution method and a comparison principle involving the fractional $p(x)$-Laplacian. In \cite{Bahrouni2018a} the authors have developed some qualitative properties of the fractional
Sobolev space $W^{s,q(x),p(x,y)}(\Omega)$ for $s\in(0,1)$. Moreover the authos have studied the following problem \begin{eqnarray}Lu(x) + |u(x)|^{q(x)-1}u(x)& = &\lambda|u(x)|^{r(x)-1}u(x)~\text{in}~ \Omega,\nonumber\\
u(x)& = &0~\text{ in}~ \mathbb{R}^N\setminus\Omega,\nonumber\end{eqnarray}
where $ \lambda > 0, 1 < r(x) < p^-$. One important work which is worth mentioning is due to  \cite{Azroul2019} where the authors have studied the eigenvalue problem involving a fractional $p(x)$-Laplacian. The existence of the eigenvalues and eigenfunctions is based on Ekeland's variational principle. 
 In \cite{Chung2020} the authors obtained the multiplicity of solutions for a problem involving a fractional $p(x)$-Laplacian eoparator. Furthermore, the existence of infinitely many solutions to a similar problem can be found in \cite{Lee2020}. \\It is note worthy here to mention that the fractional order derivatives with variable exponents plays an important role to the nonlinear diffusion process \cite{Lorenzo2002} when it reacts to the temperature change in particular. Another application is due to Chen et al. \cite{Chen2006} to the image restoration. It can be seen from the work due to \cite{Antontsev2005} that the operators with a  variable exponent appear in a natural way in continuum mechanics. Further an application of such problems can also bee seen in the elasticity theory \cite{Zhikov1987}. Physicists also appeal to problems involving fractional order Laplacian of variable exponent in their study of fractional quantum mechanics and Levy's path integral \cite{Laskin2000}.\\ Readers can gain a good amount of information about the fractional Laplacian operator from \cite{Kwasnicki2017,Nezza2012}. The literature survey on problems involving variable order fractional Laplacian and singularity of variable exponent, as far as we know, is almost next to nothing since it is still in the stage of developement. However, motivated from the above mentioned studies and due to the growing interest, we have considered our problem \eqref{prob1} with variable order fractional Laplacian operator involving both singularity and power nonlinearity of variable exponent.
\section{Preliminaries and Main result}
\noindent The aim of this paper is to study the following singular elliptic partial differential equation
\begin{align*}\label{prob1}	
(-\Delta)_{p(\cdot)}^{s} u&=\frac{\lambda}{|u|^{\gamma(x)-1}u}+f(x,u)~\text{in}~\Omega,\\
u&=0~\text{in}~\mathbb{R}^N\setminus\Omega,\tag{P}
\end{align*}
where $\lambda>0$, $\Omega\subset\mathbb{R}^N,\,N\geq2$ is a smooth, bounded domain and $s\in(0,1)$ such that $N>sp(x,y)$ for every $(x,y)\in\bar{\Omega}\times\bar{\Omega}$. The variable exponent function $p(\cdot)$ satisfy the following assumptions.
\begin{flalign*}\label{assm p}
(i)~&p\in C(\bar{\Omega}\times\bar{\Omega}, \mathbb{R}).&\\
(ii)~& p~\text{is symmetric, i.e.}~p(x,y)=p(y,x),\,\forall(x,y)\in\bar{\Omega}\times\bar{\Omega}.\tag{${\mathscr{P}}$}&\\
(iii)~& 1<p^-=\min\limits_{(x,y)\in\bar{\Omega}\times\bar{\Omega}} p(x,y)\leq p(x,y)\leq p^+=\max\limits_{(x,y)\in\bar{\Omega}\times\bar{\Omega}} p(x,y)<\infty.
\end{flalign*}
The fractional $p(\cdot)$-Laplacian operator, $(-\Delta)_{p(\cdot)}^{s}$ (see \cite{Bahrouni2018a}), is defined as
\begin{equation*}
(-\Delta)_{p(\cdot)}^{s}u(x):=P.V.\int_{\mathbb{R}^n}\frac{|u(x)-u(y)|^{p(x,y)-2}(u(x)-u(y))}{|x-y|^{N+sp(x,y)}}\d y~\text{for all}~x\in\mathbb{R}^N,
\end{equation*}
where P.V. refers to the Cauchy principal value of an integral. Note that the usual fractional $p$-Laplacian is a special case of this operator with $p(x,y)\equiv$ constant. The singular exponent $\gamma$ satisfies
\begin{flalign*}\label{assm gamma}
(i)~&\gamma\in C(\bar{\Omega},(0,1))~\text{and}~ 0<\gamma^-=\min\limits_{x\in\bar{\Omega}}\gamma(x)\leq\gamma(x)\leq\gamma^+=\max\limits_{x\in\bar{\Omega}}\gamma(x)<1.\tag{${\mathscr{G}}$}&
\end{flalign*}
The function $f$ satisfies the following $p(\cdot)$-sublinear growth conditions.
\begin{align*}\label{assm f}
(i)~&f\in C(\bar{\Omega}\times\mathbb{R}, \mathbb{R})&\\
(ii)~& \exists\, \delta>0 ~\text{such that}~ \forall\,x\in\Omega ~\text{and}~ |t|\leq\delta,  f(\cdot,t) ~\text{is odd and}~\lim\limits_{t\rightarrow0}\frac{f(x,t)}{t^{(p^--1)}}=+\infty&\\
 & ~\text{uniformly on}~ \Omega \tag{${\mathscr{F}}$}&\\
(iii)~& \exists\, \delta'>0 ~\text{and}~ \alpha(x)\in(1-\gamma(x), p^-) ~\text{with}~ 1-\gamma^-<\alpha^-\leq\alpha^+<p^- ~\text{such that}~ \forall\,x\in\Omega&\\ &~\text{and}~ |t|\leq \delta',~\text{we have}~tf(x,t)\leq\alpha(x) F(x,t), ~\text{where}~ \alpha\in C(\bar{\Omega}, \mathbb{R}) ~\text{and}~ F(x,t)=\int_{0}^{t}f(x,\tau) \d\tau.&\\
(iv)~& 1<\alpha^-=\min\limits_{x\in\bar{\Omega}} \alpha(x)\leq\alpha(x)\leq\alpha^+=\max\limits_{x\in\bar{\Omega}}\alpha(x)<p^-.&
\end{align*}
It is noteworthy that there are no restrictions on $f$, for $t$ at infinity. The presence of the variable exponent in the problem \ref{prob1} naturally confirms that we will look for solutions in a fractional Sobolev space with variable exponent. We first recall some definitions and notations of variable order Lebesgue spaces due to \citet{repos2015}. Let $\Omega \subset\mathbb{R}^N$ be a smooth and open domain. Consider the family $$C_{+}(\bar{\Omega})=\{q\in C(\bar{\Omega},\mathbb{R}):q(x)>1,\,\forall\,x\in\bar{\Omega}\}.$$
Define $q^-=\inf\limits_{x\in{\Omega}}q(x)$ and $q^+=\sup\limits_{x\in{\Omega}}q(x)$ for all $q\in C_{+}(\bar{\Omega})$. For all $q\in C_{+}(\bar{\Omega})$, the Lebesgue space of variable order is defined by
$$L^{q(x)}(\Omega)=\{u: u ~\text{is measurable and}~ \int_{\Omega}|u(x)|^{q(x)}\d x<\infty\}$$
endowed with the Luxemburg norm
$$\|u\|_{q(x)}=\inf\{\mu>0:\int_{\Omega}\left|\frac{u(x)}{\mu}\right|^{q(x)}\!\!\!\!\!\!\d x\leq1\}.$$
The space ($L^{q(x)}(\Omega), \|\cdot\|$) is a Banach space. Moreover, if $\Omega$ is bounded and the following assumption \eqref{assm q} holds, then the space is separable, reflexive, uniformly convex Banach space (\cite[Theorem 1.6, 1.10]{Fan2001}).
\begin{flalign*}\label{assm q}
(i)~& 1<q^-=\min\limits_{x\in\bar{\Omega}}q(x)\leq q(x)\leq q^+=\max\limits_{x\in\bar{\Omega}}q(x)<\infty.\tag{${\mathscr{Q}}$}&
\end{flalign*}
Let $u\in L^{p(x)}(\Omega)$ and $v\in L^{q(x)}(\Omega)$ with $\frac{1}{p(x)}+\frac{1}{q(x)}=1$, then we have the following H\"{o}lder type inequality
\begin{equation}\label{holder ineq}
\left|\int_{\Omega}uv\d x\right|\leq\left(\frac{1}{p^-}+\frac{1}{q^-}\right)\|u\|_{p(x)}\|v\|_{q(x)}.
\end{equation}
Let us now recall the modular function $\rho_{q(x)}:L^{q(x)}(\Omega)\rightarrow\mathbb{R}$ which plays an important role in the variable order Lebesgue spaces and which is defined by
$$\rho_{q(x)}(x)=\int_{\Omega}|u(x)|^{q(x)}\d x.$$
We now state the following Lemma due to \cite[Theorem 1.3, 1.4]{Fan2001}.
\begin{lemma}\label{lemma 1}
	Suppose, $u, u_n\in L^{q(x)}(\Omega)$ for all $n\in\mathbb{N}$. Then
	\begin{enumerate}[label=(\roman*)]
		\item $\|u\|_{q(x)}\lesseqqgtr1$ iff $\rho_{q(x)}(u)\lesseqqgtr1$.
		\item $\|u\|_{q(x)}<1\Rightarrow\|u\|_{q(x)}^{q^+}\leq\rho_{q(x)}(u)\leq\|u\|_{q(x)}^{q^-}$.
		\item $\|u\|_{q(x)}>1\Rightarrow\|u\|_{q(x)}^{q^-}\leq\rho_{q(x)}(u)\leq\|u\|_{q(x)}^{q^+}$.
		\item $\|u_n-u\|_{q(x)}\rightarrow0$ iff $\rho_{q(x)}(u_n-u)\rightarrow0$ iff $u_n\rightarrow u$ in measure, in $\Omega$ and $\rho_{q(x)}(u_n)\rightarrow\rho_{q(x)}(u)$.
	\end{enumerate}
\end{lemma}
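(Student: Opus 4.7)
The plan is to derive everything from the \emph{unit ball property} of the Luxemburg norm: for every $u \in L^{q(x)}(\Omega)\setminus\{0\}$ the infimum $\lambda := \|u\|_{q(x)}$ is attained and $\rho_{q(x)}(u/\lambda)=1$. First I would set $\phi(\mu):=\rho_{q(x)}(u/\mu)=\int_\Omega |u(x)|^{q(x)}\mu^{-q(x)}\d x$ and verify, via dominated convergence and the bound $1<q^-\le q(x)\le q^+<\infty$ from $(\mathscr{Q})$, that $\phi$ is continuous and strictly decreasing on $(0,\infty)$ with $\phi(\mu)\to\infty$ as $\mu\to 0^+$ and $\phi(\mu)\to 0$ as $\mu\to\infty$. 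Hence a unique $\lambda>0$ with $\phi(\lambda)=1$ exists, and by definition it coincides with $\|u\|_{q(x)}$.

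With the identity $\int_\Omega |u|^{q(x)}\lambda^{-q(x)}\d x=1$ in hand, (ii) and (iii) reduce to the elementary bounds $\lambda^{-q^-}\le \lambda^{-q(x)}\le \lambda^{-q^+}$ valid for $\lambda\in(0,1)$ (with the inequalities reversed for $\lambda>1$): sandwiching the integrand and rearranging yields the four inequalities claimed. Part (i) is then immediate from the unit ball property combined with (ii)--(iii): for instance, $\rho_{q(x)}(u)<1$ rules out $\lambda\ge 1$ via (iii), while $\lambda=1$ forces $\rho_{q(x)}(u)=1$ directly. The first equivalence in (iv), $\|u_n-u\|_{q(x)}\to 0 \Leftrightarrow \rho_{q(x)}(u_n-u)\to 0$, is then an immediate consequence: once $\|u_n-u\|_{q(x)}<1$, (ii) sandwiches $\rho_{q(x)}(u_n-u)$ between $\|u_n-u\|_{q(x)}^{q^+}$ and $\|u_n-u\|_{q(x)}^{q^-}$, and both sides vanish with the middle term.

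The second equivalence in (iv) is where I expect the main obstacle. The forward direction is handled by Chebyshev's inequality — $|\{|u_n-u|>\varepsilon\}|\le \rho_{q(x)}(u_n-u)/\min(\varepsilon^{q^-},\varepsilon^{q^+})$ delivers convergence in measure — while $\rho_{q(x)}(u_n)\to\rho_{q(x)}(u)$ follows from a Brezis--Lieb-type splitting applied to $|u_n|^{q(x)}=|(u_n-u)+u|^{q(x)}$. For the converse, assuming $u_n\to u$ in measure and $\rho_{q(x)}(u_n)\to\rho_{q(x)}(u)$, I would pick any subsequence, extract a further a.e.\ convergent sub-subsequence, and apply the generalised dominated convergence theorem to $f_n:=|u_n-u|^{q(x)}$ with dominant $g_n:=2^{q^+}\bigl(|u_n|^{q(x)}+|u|^{q(x)}\bigr)$: the convergence of the modulars supplies $\int g_n\to 2^{q^+ +1}\rho_{q(x)}(u)$, while $f_n\to 0$ a.e.\ delivers $\int f_n\to 0$. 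A standard subsequence-of-subsequence argument then upgrades this to full-sequence convergence.
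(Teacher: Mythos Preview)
Your argument is correct and follows the standard route (unit--ball property of the Luxemburg norm, monotonicity of $\mu\mapsto\mu^{-q(x)}$ on each side of $1$, and generalised dominated convergence for the delicate half of (iv)). Note, however, that the paper does \emph{not} supply its own proof of this lemma: it is stated with attribution to Fan--Zhao \cite[Theorems~1.3,~1.4]{Fan2001} and used as a black box. Your write-up is essentially a reconstruction of that original proof, so there is nothing to contrast at the level of strategy.

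One small point worth tightening in the forward direction of the second equivalence in (iv): your appeal to a ``Brezis--Lieb-type splitting'' to get $\rho_{q(x)}(u_n)\to\rho_{q(x)}(u)$ from $\rho_{q(x)}(u_n-u)\to 0$ tacitly uses a.e.\ convergence, which you only have along subsequences. The clean fix is exactly the sub-subsequence manoeuvre you already invoke for the converse: given any subsequence, pass to an a.e.\ convergent sub-subsequence (available from convergence in measure), apply generalised dominated convergence with $g_n=2^{q^+}\bigl(|u_n-u|^{q(x)}+|u|^{q(x)}\bigr)$ dominating $|u_n|^{q(x)}$, and conclude. This closes the only gap in your sketch.
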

\noindent For the development and properties concerning the Sobolev space $W^{k,p(x)}(\Omega)$ one may refer to \cite{Kovacik1991,Edmunds2000,Edmunds2002,repos2015} and the references therein.

\medskip

 For $0<s<1$, let us now define the variable order fractional Sobolev space as follows. Let $\Omega\subset\mathbb{R}^N$ be a smooth bounded domain and the assumptions \eqref{assm p}, \eqref{assm q} be true. Then the variable order fractional Sobolev space is defined by
\begin{align*}
W=&W^{s,q(x),p(x,y)}(\Omega)=\left\{u\in L^{q(x)}(\Omega): \iint_{\Omega\times\Omega}\cfrac{|u(x)-u(y)|^{p(x,y)}}{\mu^{p(x,y)}|x-y|^{N+sp(x,y)}}\d x\d y<\infty~\text{for some}~\mu>0 \right\}.
\end{align*}
The space $(W,\|\cdot\|_{W})$ is a Banach space endowed with the natural Gagliardo norm $\|u\|_{W}=\|u\|_{q(x)}+[u]_{s,p(x,y)}$, where $[u]_{s,p(x,y)}$ refers to the Gagliardo semi-norm and is given by
$$[u]_{s,p(x,y)}=\inf\left\{\mu>0: \iint_{\Omega\times\Omega}\cfrac{|u(x)-u(y)|^{p(x,y)}}{\mu^{p(x,y)}|x-y|^{N+sp(x,y)}}\d x\d y<1\right\}.$$
Moreover, the space $(W,\|\cdot\|_{W})$ is separable, reflexive Banach space \cite[Lemma 3.1]{Bahrouni2018}. Let $W_0$ denote the closure of $C_c^{\infty}(\Omega)$ in $W$, then $(W_0,\|\cdot\|_{W_0})$ is a Banach space equipped with the norm $$\|u\|_{W_0}=[u]_{s,p(x,y)}=\inf\left\{\mu>0: \iint_{\Omega\times\Omega}\cfrac{|u(x)-u(y)|^{p(x,y)}}{\mu^{p(x,y)}|x-y|^{N+sp(x,y)}}\d x\d y<1\right\}.$$
Similar to the Lemma \ref{lemma 1}, we have the following Lemma for the space $W_0$ with the modular function $\rho_{W_0}:W_0\rightarrow\mathbb{R}$ defined by 
$$\rho_{W_0}(u)=\iint_{\Omega\times\Omega}\cfrac{|u(x)-u(y)|^{p(x,y)}}{|x-y|^{N+sp(x,y)}}\d x\d y.$$
\begin{lemma}\label{lemma 2}
	Suppose, $u, u_n\in W_0$ for all $n\in\mathbb{N}$. Then
	\begin{enumerate}[label=(\roman*)]
		\item $\|u\|_{W_0}\lesseqqgtr1$ iff $\rho_{W_0}(u)\lesseqqgtr1$.
		\item $\|u\|_{W_0}<1\Rightarrow\|u\|_{W_0}^{p^+}\leq\rho_{W_0}(u)\leq\|u\|_{W_0}^{p^-}$.
		\item $\|u\|_{W_0}>1\Rightarrow\|u\|_{W_0}^{p^-}\leq\rho_{W_0}(u)\leq\|u\|_{W_0}^{p^+}$.
		\item $\|u_n-u\|_{W_0}\rightarrow0$ iff $\rho_{W_0}(u_n-u)\rightarrow0$ iff $u_n\rightarrow u$ in measure in $\Omega$ and $\rho_{W_0}(u_n)\rightarrow\rho_{W_0}(u)$.
	\end{enumerate}
\end{lemma}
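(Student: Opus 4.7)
My plan is to mimic the proof of Lemma~\ref{lemma 1}, reading the Gagliardo modular as a Musielak--Orlicz modular on the product space $\Omega\times\Omega$ equipped with the (non-integrable) measure $d\nu(x,y):=|x-y|^{-(N+sp(x,y))}\,dx\,dy$. For $u\in W_0$ I introduce $\phi_u(\mu):=\iint_{\Omega\times\Omega}\mu^{-p(x,y)}|u(x)-u(y)|^{p(x,y)}\,d\nu(x,y)$, so that $\phi_u(1)=\rho_{W_0}(u)$ and $\|u\|_{W_0}=\inf\{\mu>0:\phi_u(\mu)<1\}$. The first step is to show that $\phi_u$ is finite on $(0,\infty)$, strictly decreasing and continuous; finiteness propagates from any point $\mu_0$ via the pointwise bound $\mu^{-p(x,y)}\leq(\mu_0/\mu)^{p^+}\mu_0^{-p(x,y)}$ for $\mu<\mu_0$, and continuity is then a dominated-convergence statement on each bounded subinterval. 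Taking one-sided limits at the infimum defining $\|u\|_{W_0}$ yields the fundamental identity $\phi_u(\|u\|_{W_0})=1$.

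From here, parts (i)--(iii) drop out quickly. Part (i) is strict monotonicity of $\phi_u$ compared at $\mu=1$ and $\mu=\|u\|_{W_0}$. For (ii) and (iii), setting $a:=\|u\|_{W_0}$, I would insert the elementary bounds
\begin{equation*}
a^{p^+}\leq a^{p(x,y)}\leq a^{p^-}\quad(a<1),\qquad a^{p^-}\leq a^{p(x,y)}\leq a^{p^+}\quad(a>1)
\end{equation*}
pointwise into the identity $1=\phi_u(a)=\iint a^{-p(x,y)}|u(x)-u(y)|^{p(x,y)}\,d\nu$ and rearrange to sandwich $\rho_{W_0}(u)$ between $\|u\|_{W_0}^{p^-}$ and $\|u\|_{W_0}^{p^+}$ in the order dictated by whether $a$ is smaller or larger than $1$. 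The first half of (iv), namely $\|u_n-u\|_{W_0}\to 0\Leftrightarrow\rho_{W_0}(u_n-u)\to 0$, is then a direct application of (ii)--(iii) to $u_n-u$ once the relevant side is eventually less than $1$.

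The main obstacle is the remaining equivalence in (iv) between modular convergence and the conjunction \emph{``$u_n\to u$ in measure on $\Omega$ and $\rho_{W_0}(u_n)\to\rho_{W_0}(u)$''}. Unlike the situation in Lemma~\ref{lemma 1}, the modular here controls only non-local differences on $\Omega\times\Omega$, so recovering pointwise information on $\Omega$ requires an auxiliary input; I plan to invoke the embedding of $W_0$ into $L^{q(x)}(\Omega)$ from \cite{Bahrouni2018a} to pass from $\|u_n-u\|_{W_0}\to 0$ to convergence in $L^{q(x)}(\Omega)$, hence in measure. The convergence $\rho_{W_0}(u_n)\to\rho_{W_0}(u)$ will follow from a Br\'ezis--Lieb-type splitting combined with Vitali's theorem: modular convergence of the differences together with $u\in W_0$ yields uniform absolute continuity of $\iint_E|u_n(x)-u_n(y)|^{p(x,y)}\,d\nu$ over measurable $E\subset\Omega\times\Omega$, which, paired with a.e.\ convergence along a subsequence, closes the loop. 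The reverse direction uses essentially the same Vitali/uniform-integrability argument applied directly to $(u_n-u)(x)-(u_n-u)(y)$, with convergence in measure on $\Omega$ upgrading to convergence in measure of the Gagliardo differences on $\Omega\times\Omega$ via the product structure.
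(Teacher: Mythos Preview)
The paper does not prove this lemma; it is simply stated as the $W_0$-analogue of Lemma~\ref{lemma 1}, which in turn is quoted from \cite{Fan2001}. Your argument for (i)--(iii) and for the equivalence $\|u_n-u\|_{W_0}\to 0\Leftrightarrow\rho_{W_0}(u_n-u)\to 0$ is the standard Musielak--Orlicz one and is correct; the identity $\phi_u(\|u\|_{W_0})=1$ together with the pointwise bounds $a^{p^\pm}\lesseqqgtr a^{p(x,y)}$ is exactly what is needed.

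The gap is in the reverse implication of the last equivalence in (iv). Your Vitali sketch hinges on ``convergence in measure on $\Omega$ upgrading to convergence in measure of the Gagliardo differences on $\Omega\times\Omega$ via the product structure''. This is fine for Lebesgue measure on the product, but the measure you must work with is $d\nu$, and since $\int_\Omega|x-y|^{-N-sp(x,y)}\,dy=\infty$ for every $x\in\Omega$, any cylinder $A\times\Omega$ with $|A|>0$ has $\nu(A\times\Omega)=\infty$. Hence convergence in $\nu$-measure does \emph{not} follow from convergence in Lebesgue measure on $\Omega$, and the uniform $\nu$-integrability required by Vitali is not apparent from the bare hypothesis $\rho_{W_0}(u_n)\to\rho_{W_0}(u)$. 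The clean fix is to run the Br\'ezis--Lieb argument (which you already invoke for the forward step) in this direction instead: pass to a subsequence with $u_n\to u$ a.e.\ in $\Omega$, hence $u_n(x)-u_n(y)\to u(x)-u(y)$ a.e.\ on $\Omega\times\Omega$; Br\'ezis--Lieb with respect to $d\nu$ (which needs only a.e.\ convergence and $\sup_n\rho_{W_0}(u_n)<\infty$) gives $\rho_{W_0}(u_n)-\rho_{W_0}(u_n-u)\to\rho_{W_0}(u)$, and combining with $\rho_{W_0}(u_n)\to\rho_{W_0}(u)$ yields $\rho_{W_0}(u_n-u)\to 0$ along the subsequence. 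The subsequence principle then recovers the full sequence.
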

\noindent We now state the following embedding result due to \citet[Theorem 1.1]{Kaufmann2017}.
\begin{theorem}\label{embed cpt}
	Let $\Omega\subset\mathbb{R}^N$ be a smooth bounded domain and $0<s<1$. Let $q(x)$ and $p(x,y)$ satisfies \eqref{assm q} and \eqref{assm p} respectively such that $N>sp(x,y)$ and $q(x)>p(x,x)$ for all $x,y\in\bar{\Omega}$. Assume that $r\in C(\bar{\Omega},(1,\infty))$ such that
	$$p_s^*(x)=\frac{Np(x,x)}{N-sp(x,x)}>r(x)\geq r^->1, ~\text{for}~x\in\bar{\Omega}.$$
Then there exists a constant $C=C(N,s,p,q,r,\Omega)$ such that for every $u\in W$ we have
$$\|u\|_{r(x)}\leq C\|u\|_W.$$
Therefore, the space $W=W^{s,q(x),p(x,y)}(\Omega)$ is continuously embedded into the variable order Lebesgue space $L^{r(x)}(\Omega)$ for all $r\in(1,p_s^*)$. Moreover, this embedding is compact. This embedding result holds even for the space $W_0$.
\end{theorem}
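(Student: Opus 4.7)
The plan is to reduce the variable-exponent embedding to the classical constant-exponent fractional Sobolev embedding via a localization/covering argument on the compact set $\bar\Omega$, and then glue the local estimates back together. In short, on each sufficiently small ball the exponents $p(x,y)$ and $r(x)$ are nearly constant, and on such a ball one can compare the variable-order Gagliardo modular with a constant-order one, invoke the usual Rellich--Kondrachov theorem, and finally sum over a finite cover.

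First, I would exploit the strict pointwise inequality $r(x)<p_s^*(x)$ and the continuity of $p$, $q$, $r$: for each $x_0\in\bar\Omega$ there exist a radius $\delta_{x_0}>0$ and constants $\bar p_{x_0}$, $\bar r_{x_0}$ with $1<\bar p_{x_0}\leq p(x,y)$ for all $x,y\in B(x_0,\delta_{x_0})$, $r(x)\leq\bar r_{x_0}$ on $B(x_0,\delta_{x_0})\cap\bar\Omega$, and the strict inequality $\bar r_{x_0}<N\bar p_{x_0}/(N-s\bar p_{x_0})$ still holds. By compactness of $\bar\Omega$ I extract a finite subcover $\{B_i\}_{i=1}^m$ with associated exponents $\bar p_i,\bar r_i$. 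On each fixed $B_i$ I then compare the variable-exponent Gagliardo modular with the classical $W^{s,\bar p_i}(B_i)$ seminorm: since $B_i\times B_i$ is bounded (so $|x-y|$ is bounded), splitting the integrand according to whether $|u(x)-u(y)|\leq 1$ or $>1$ and using the variable exponent relation $p(x,y)\geq\bar p_i$ gives an estimate of the form
\begin{equation*}
[u]_{s,\bar p_i;B_i}^{\bar p_i}\leq C_i\bigl(1+\rho_{W_0}(u)\bigr),
\end{equation*}
where $C_i$ depends on $s,\delta_i,\bar p_i,p^+,|B_i|$. Combined with the trivial embedding $\|u\|_{L^{\bar p_i}(B_i)}\leq C\|u\|_{L^{q(x)}(\Omega)}$ (valid because $q(x)>p(x,x)\geq\bar p_i$ on $B_i$), the classical constant-exponent fractional Sobolev embedding $W^{s,\bar p_i}(B_i)\hookrightarrow L^{\bar r_i}(B_i)$, which is both continuous and compact, yields $\|u\|_{L^{\bar r_i}(B_i)}\leq C_i'\bigl(\|u\|_{q(x)}+[u]_{s,p(x,y)}+1\bigr)$.

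To pass from $L^{\bar r_i}$ on each ball back to $L^{r(x)}(\Omega)$, I use the pointwise bound $|u|^{r(x)}\leq 1+|u|^{\bar r_i}$ valid on $B_i$ (since $r(x)\leq\bar r_i$), sum over the finite cover, and apply Lemma \ref{lemma 1} to convert the resulting modular inequality into the norm bound $\|u\|_{r(x)}\leq C\|u\|_W$. For the compactness assertion, given a bounded sequence $\{u_n\}\subset W$ I apply Rellich--Kondrachov on each $B_i$ to extract a subsequence convergent in $L^{\bar r_i}(B_i)$, then produce a common diagonal subsequence over the finite cover, and from the domination $|u_n-u|^{r(x)}\leq 1+|u_n-u|^{\bar r_i}$ together with Lemma \ref{lemma 1}(iv) conclude convergence in $L^{r(x)}(\Omega)$.

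The main obstacle I expect is the quantitative local comparison above: one must track carefully how the denominator $|x-y|^{N+sp(x,y)}$ compares with $|x-y|^{N+s\bar p_i}$ on the bounded ball $B_i$ (using $p(x,y)\geq\bar p_i$ and the boundedness of $|x-y|$), and how the numerator $|u(x)-u(y)|^{p(x,y)}$ controls $|u(x)-u(y)|^{\bar p_i}$ after splitting into the regions $\{|u(x)-u(y)|\leq 1\}$ and its complement. Once this single-ball passage from variable to constant exponent is secured, the remainder is a standard covering/partition-of-unity gluing; the global bound and the compactness both follow by summing over the finite subcover of $\bar\Omega$ and invoking the modular--norm equivalence of Lemma \ref{lemma 1}.
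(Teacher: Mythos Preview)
The paper does not supply its own proof of this theorem: it is simply quoted as \cite[Theorem 1.1]{Kaufmann2017} (with the Remark immediately following pointing to \cite{Azroul2019} for the endpoint $q(x)=p(x,x)$). So there is nothing in the paper to compare your argument against.

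That said, your localization--covering strategy is exactly the route taken in the cited reference \cite{Kaufmann2017}: use uniform continuity of $p$ and $r$ on $\bar\Omega$ to reduce to finitely many small balls on which the exponents are almost constant, compare the variable-exponent Gagliardo integrand with a constant-exponent one, invoke the classical compact embedding $W^{t,\bar p_i}(B_i)\hookrightarrow L^{\bar r_i}(B_i)$, and sum. The one technical point you correctly flag as ``the main obstacle'' does require more than the naive splitting $\{|u(x)-u(y)|\le 1\}$ versus its complement: on the region $\{|u(x)-u(y)|\le 1\}$ the integrand $|x-y|^{-N-s\bar p_i}$ is not integrable over $B_i\times B_i$, so the stated bound $[u]_{s,\bar p_i;B_i}^{\bar p_i}\le C_i(1+\rho_{W_0}(u))$ cannot be obtained in that form. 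The fix in \cite{Kaufmann2017} is to drop the fractional order slightly, choosing $t\in(0,s)$ so that the critical exponent $N\bar p_i/(N-t\bar p_i)$ still exceeds $\bar r_i$, and then estimating the $W^{t,\bar p_i}$-seminorm via H\"older's inequality: the gap $s-t$ provides an integrable weight $|x-y|^{\varepsilon}$ that absorbs the discrepancy between $p(x,y)$ and $\bar p_i$ on sufficiently small balls. With that adjustment, the rest of your outline (finite subcover, diagonal subsequence for compactness, modular--norm conversion via Lemma~\ref{lemma 1}) goes through as you describe.
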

\begin{remark}
	Theorem \ref{embed cpt} holds true even for $q(x)=p(x,x)$. See \cite[Theorem 2.1]{Azroul2019} for the proof.
\end{remark}
\noindent Prior to stating our main result, we define a weak solution to the problem \eqref{prob1}.

\medskip 

\begin{definition}\label{weak soln}
	A function $u\in W_0$ is said to be a {\it weak} solution to \eqref{prob1}, if $\frac{\phi}{u^{\gamma(\cdot)}}\in L^1(\Omega)$ and
	\begin{equation}\label{weak main}
	\!\!\!\iint_{\Omega\times\Omega}\!\!\!\!\!\!\frac{|u(x)-u(y)|^{p(x,y)-2}(u(x)-u(y))(\phi(x)-\phi(y))}{|x-y|^{N+sp(x,y)}}\d x\d y-\int_{\Omega}\left(\frac{\lambda}{|u|^{\gamma(x)-1}u}+f(x,u)
	\right)\phi \d x=0,
	\end{equation}
	for all $\phi\in W_0.$
\end{definition}
\noindent The associated energy functional $I_{\lambda}:W_0\rightarrow(-\infty, \infty]$ for the problem \eqref{prob1} is given by
\begin{equation}\label{energy main}
I_{\lambda}(u):=\iint_{\Omega\times\Omega}\frac{|u(x)-u(y)|^{p(x,y)}}{p(x,y)|x-y|^{N+sp(x,y)}}\d x\d y-\int_{\Omega}\frac{\lambda}{1-\gamma(x)}|u|^{1-\gamma(x)}\d x-\int_{\Omega}F(x,u) \d x,
\end{equation}
where $F(x,t)=\int_{0}^{t}f(x,\tau)\d\tau$. With the above assumptions and definitions we conclude this section by stating the main result of the article.
\begin{theorem}\label{main thm}
	Suppose the assumptions \eqref{assm p},\eqref{assm q}, \eqref{assm gamma} and \eqref{assm f} are true. Then there exists $\Lambda<\infty$ such that for every $\lambda\in(0, \Lambda)$, the problem \eqref{prob1} has infinitely many nonnegative solutions $\{u_n\}\subset W_0\cap L^{\infty}(\bar{\Omega})$ with $I_{\lambda}(u_n)<0,$ $I_{\lambda}(u_n)\rightarrow0^-$ and $u_n\rightarrow0$ in $W_0.$
\end{theorem}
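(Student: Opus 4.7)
The plan is to apply Kajikiya's version \cite{Kajikiya2005} of the symmetric mountain-pass (genus) theorem to a suitably truncated even functional $\tilde I_\lambda$, and then to bootstrap the resulting critical points back to the original problem via a uniform $L^\infty$ estimate obtained by Moser iteration. Because assumption \eqref{assm f} on $f$ is only local near $t=0$, set $\delta_0:=\min(\delta,\delta')$, let $\tilde f(x,t)$ be a continuous, odd extension of $f$ that agrees with $f$ on $|t|\le\delta_0$ and vanishes for $|t|\ge 2\delta_0$, and consider
\[
\tilde I_\lambda(u):=\iint_{\Omega\times\Omega}\frac{|u(x)-u(y)|^{p(x,y)}}{p(x,y)|x-y|^{N+sp(x,y)}}\d x\d y-\int_\Omega\frac{\lambda|u|^{1-\gamma(x)}}{1-\gamma(x)}\d x-\int_\Omega\tilde F(x,u)\d x,
\]
where $\tilde F(x,t)=\int_0^t\tilde f(x,\tau)\d\tau$. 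Then $\tilde I_\lambda$ is even and coincides with $I_\lambda$ on the set $\{\|u\|_\infty\le\delta_0\}$.

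The checks I carry out on $\tilde I_\lambda$ are the following. First, using Lemma \ref{lemma 2}(ii)--(iii), the embedding in Theorem \ref{embed cpt}, and the exponent constraints $1-\gamma^-<\alpha^-\le\alpha^+<p^-$ together with Young's inequality, one shows that $\tilde I_\lambda$ is coercive and bounded below on $W_0$; the smallness threshold $\Lambda$ for $\lambda$ is fixed in this step. Second, a standard argument for the fractional $p(\cdot)$-Laplacian, combining weak convergence, the compact embedding $W_0\hookrightarrow L^{r(x)}(\Omega)$, and strict monotonicity of the operator, gives the Palais-Smale condition at every negative level (the singular term being treated by a Fatou-type passage). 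Third, for the genus estimate I fix, for each $k\in\mathbb{N}$, a $k$-dimensional subspace $V_k\subset C_c^\infty(\Omega)\subset W_0$ spanned by functions with pairwise disjoint supports; since $1-\gamma^+<p^-$ and all norms are equivalent on $V_k$, the negative singular term dominates the positive $[\cdot]_{s,p(x,y)}$-term on a small sphere in $V_k$, yielding $\sup\tilde I_\lambda\le -a_k<0$ there. Kajikiya's theorem then produces a sequence $\{u_n\}\subset W_0$ of critical points of $\tilde I_\lambda$ with $\tilde I_\lambda(u_n)<0$, $\tilde I_\lambda(u_n)\to 0^-$, and $u_n\to 0$ in $W_0$.

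To finish, I apply the Moser iteration scheme in the variable-exponent fractional setting to the equation satisfied by each $u_n$ to obtain a uniform $L^\infty(\bar\Omega)$ bound; combined with $u_n\to 0$ in $W_0$ and interpolation, this forces $\|u_n\|_\infty\to 0$, so for $n$ large enough the truncation is transparent and $u_n$ solves \eqref{prob1}. Nonnegativity is imposed by replacing $u$ by $u^+$ in both the singular and the $\tilde f$ terms before truncation and testing the resulting Euler-Lagrange equation against $u_n^-$. The principal obstacle is the singular term $\int\lambda|u|^{1-\gamma(x)}/(1-\gamma(x))\d x$: because $0<1-\gamma(x)<1$, the functional $\tilde I_\lambda$ is merely continuous, not $C^1$, on $W_0$, so Kajikiya's theorem does not apply verbatim. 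I plan to circumvent this by regularizing the singular part to $(\varepsilon+|u|^2)^{(1-\gamma(x))/2}$, applying Kajikiya to the resulting even $C^1$ functional, and passing to the limit $\varepsilon\to 0^+$ using the uniform coercivity estimates; verifying the integrability $\phi/u^{\gamma(\cdot)}\in L^1$ required by Definition \ref{weak soln} will also demand a D\'\i az-Saa style test-function argument to control behaviour near the zero set of $u$.
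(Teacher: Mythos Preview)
Your strategy matches the paper's almost step for step: truncate $f$ near $t=0$, verify that the resulting even functional is coercive, bounded below, and satisfies the Palais--Smale condition, produce for each $k$ a $k$-dimensional subspace of $C_c^\infty(\Omega)$ with disjoint supports on which the functional is negative on a small sphere, invoke Kajikiya's theorem, and then use a Moser-iteration $L^\infty$ bound to remove the truncation. Two points of contrast are worth noting.

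First, you are more scrupulous than the paper about the $C^1$ issue. The paper writes down $\bar I_\lambda$, observes that the singular term obstructs differentiability, and then proceeds to apply Kajikiya's theorem to $\bar I_\lambda$ anyway, freely writing $\langle\bar I_\lambda'(u_n),u_n\rangle$ and $\langle\bar I_\lambda'(u_n),u\rangle$. Your $\varepsilon$-regularisation $(\varepsilon+|u|^2)^{(1-\gamma(x))/2}$ is an honest way to make this rigorous; the cost is that you must keep the genus lower bounds and the negative energy levels uniform in $\varepsilon$ in order to recover, in the limit, infinitely many distinct critical points of the unregularised functional. This bookkeeping is doable but not automatic, and you should budget for it. A second, minor difference in the genus step: the paper drives $\bar I_\lambda$ negative on the small sphere using the $\bar F$-term via assumption \eqref{assm f}(ii) (obtaining $\bar F(x,u)\ge C|u|^{\alpha^+}/\alpha^+$), whereas you use the singular term. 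Either works because both exponents $\alpha^+$ and $1-\gamma^+$ lie below $p^-$.

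One genuine slip: your plan to enforce nonnegativity by ``replacing $u$ by $u^+$ in both the singular and the $\tilde f$ terms before truncation'' destroys the evenness of $\tilde I_\lambda$, and without evenness Kajikiya's theorem is unavailable. The paper keeps the functional even (it is built from $|u|^{1-\gamma(x)}$ and the odd $\bar f$), obtains the sequence $\{u_n\}$, and only afterwards tests the Euler--Lagrange relation against $u_n^-$, using the elementary inequality $(a-b)(a^--b^-)\le -(a^--b^-)^2$ to conclude $\rho_{W_0}(u_n^-)\le 0$, hence $u_n^-\equiv 0$. You should follow the same route: retain the even functional throughout and deduce $u_n\ge 0$ a~posteriori.
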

\section{Existence of infinitely many solutions}
\noindent This section is entirely devoted to obtain infinitely many solutions with the help of symmetric Mountain Pass Theorem due to Kajikiya \cite[Theorem 1(i)]{Kajikiya2005}. The main difficulty to apply the symmetric Mountain Pass Theorem is that the functional $I_{\lambda}$ fails to be $C^1(\Omega)$ due the presence of the singular term. Therefore, we first modify the problem \eqref{prob1} in the neighbourhood of $0$ by employing a cut-off technique developed in \cite{Clark1972}, which can also be found in \cite{Kajikiya2005}. Suppose the assumptions \eqref{assm f} holds true. Choose $0<l\leq\frac{1}{2} \min \{\delta, \delta'\}$, where $\delta$, $\delta'>0$ are as in \eqref{assm f}. Let us define a bounded $C^1$ function $\xi:\mathbb{R}\rightarrow\mathbb{R}^+$ with $0\leq\xi(t)\leq1$ such that $\xi(t)=1$, if $|t|\leq l$, $\xi(t)=0$ if $|t|\geq 2l$, $\xi$ is decreasing in $[l,2l]$ and increasing in $[-2l,-l]$ and define $\bar{f}(x, u)=f(x,u)\xi(u)$. Now consider the following cut-off problem
\begin{align}\label{prob2}
\begin{split}
(-\Delta)_{p(\cdot)}^{s}u&=\frac{\lambda}{|u|^{\gamma(x)-1}u}+\bar{f}(x,u)~\text{in}~\Omega,\\
u&=0~\text{in}~\mathbb{R}^N\setminus\Omega.
\end{split}
\end{align}
Then we say that a function $u\in W_0$ is a {\it weak} solution of \eqref{prob2}, if $\frac{\phi}{u^{\gamma(\cdot)}}\in L^1(\Omega)$ and
	\begin{equation}\label{weak cutoff}
	\iint_{\Omega\times\Omega}\frac{|u(x)-u(y)|^{p(x,y)-2}(u(x)-u(y))(\phi(x)-\phi(y))}{|x-y|^{N+sp(x,y)}}\d x\d y-\int_{\Omega}\left(\frac{\lambda}{|u|^{\gamma(x)-1}u}+\bar{f}(x,u)
	\right)\phi \d x=0,
	\end{equation}
	for all $\phi\in W_0.$
The associated energy functional $\bar{I}_{\lambda}:W_0\rightarrow(-\infty, \infty]$ corresponding to the problem \eqref{prob2} is given by
\begin{equation}\label{energy cutoff}
\bar{I}_{\lambda}(u)=\iint_{\Omega\times\Omega}\frac{|u(x)-u(y)|^{p(x,y)}}{p(x,y)|x-y|^{N+sp(x,y)}}\d x\d y-\int_{\Omega}\frac{\lambda}{1-\gamma(x)}|u|^{1-\gamma(x)}\d x-\int_{\Omega}\bar{F}(x, u)\d x,
\end{equation}
where $\bar{F}(x, t)=\int_0^t \bar{f}(x, \tau)d\tau$. It is easy to see that if $u$ is a solution to the problem \eqref{prob2} with $\|u\|_{\infty}\leq l$ then $u$ is also a solution to the problem \eqref{prob1}. Hence we will obtain infinitely many solutions to the problem \eqref{prob2} such that $\|u\|_{\infty}\leq l$. Prior to proving the existence results, let us recall the statement of symmetric Mountain Pass Theorem due to Kajikiya \cite{Kajikiya2005} followed by the definition of {\it genus} of a set.
\begin{definition}[\bf{Genus} \cite{Kajikiya2005}]\label{genus}
	Let $X$ be a Banach space and $A\subset X$. A set $A$ is said to be symmetric if $u\in A$ implies $-u\in A$. Let $A$ be a close, symmetric subset of $X$ such that $0\notin A$. We define a genus $\gamma(A)$ of $A$ by the smallest integer $k$ such that there exists an odd continuous mapping from $A$ to $\mathbb{R}^{k}\setminus\{0\}$. We define $\gamma(A)=\infty$, if no such $k$ exists.
\end{definition}
\begin{theorem}[Symmetric Mountain Pass Theorem \cite{Kajikiya2005}]\label{sym mountain}
	Let $X$ be an infinite dimensional Banach space and $I\in C^1(X,\mathbb{R})$ satisfies the following
	\begin{itemize}
		\item[(i)] $I$ is even, bounded below, $I(0)=0$ and $I$ satifies the $(PS)_c$ condition.
		\item[(ii)] For each $n\in\mathbb{N}$, there exists an $A_n\in\Gamma_n$ such that $\sup\limits_{u\in A_n}I(u)<0.$ 
	\end{itemize}
	Then for each $n\in\mathbb{N}$, $c_n=\inf\limits_{A\in \Gamma_n}\sup\limits_{u\in A}I(u)<0$ is a critical value of $I.$
\end{theorem}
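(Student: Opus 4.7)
The plan is to treat Theorem \ref{sym mountain} as the abstract minimax result and prove it by the classical scheme: define the candidate critical levels via genus, show they are negative and finite, and then rule out noncriticality by an equivariant deformation argument. Let $\Gamma_n$ denote the collection of closed symmetric subsets $A\subset X\setminus\{0\}$ with $\gamma(A)\geq n$, and set
\begin{equation*}
c_n=\inf_{A\in\Gamma_n}\sup_{u\in A}I(u).
\end{equation*}
First I would verify the basic properties of $c_n$: since $I$ is bounded below, $c_n\geq\inf_X I>-\infty$; since hypothesis (ii) supplies $A_n\in\Gamma_n$ with $\sup_{A_n}I<0$, we obtain $c_n<0$; and because $\Gamma_{n+1}\subset\Gamma_n$, the sequence $c_n$ is monotone nondecreasing. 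These are purely formal consequences of the genus and of the hypotheses.

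Next I would establish the key tool: an \emph{equivariant deformation lemma} for even $C^1$ functionals satisfying $(PS)_c$. Given $c<0$ and assuming for contradiction that the critical set $K_c=\{u\in X:I(u)=c,\,I'(u)=0\}$ is empty, the $(PS)_c$ condition produces a uniform lower bound $\|I'(u)\|\geq\beta>0$ on a neighborhood of $I^{-1}([c-2\varepsilon,c+2\varepsilon])$ for some small $\varepsilon>0$. Because $I$ is even, $I'$ is odd, and a standard partition-of-unity construction yields an \emph{odd} locally Lipschitz pseudogradient vector field $V$. Integrating a cutoff of $V$ produces an odd homeomorphism $\eta:X\to X$ satisfying $\eta(I^{c+\varepsilon})\subset I^{c-\varepsilon}$, where $I^a:=\{u\in X:I(u)\leq a\}$. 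The construction is standard but must be carried out carefully to preserve oddness; this is the main technical obstacle.

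With the deformation in hand the minimax argument closes quickly. Fix $n$ and suppose $c_n$ is not a critical value, i.e.\ $K_{c_n}=\emptyset$. Apply the deformation lemma to obtain the odd homeomorphism $\eta$ with $\eta(I^{c_n+\varepsilon})\subset I^{c_n-\varepsilon}$. By the definition of $c_n$ as an infimum, choose $A\in\Gamma_n$ with $\sup_A I<c_n+\varepsilon$, so that $A\subset I^{c_n+\varepsilon}$. Since $\eta$ is an odd homeomorphism, the image $\eta(A)$ is closed, symmetric, does not contain $0$ (indeed $\eta(0)=0$ because $\eta$ is odd and a homeomorphism forces fixed-point preservation after a standard rescaling argument), and
\begin{equation*}
\gamma(\eta(A))\geq\gamma(A)\geq n,
\end{equation*}
by the classical monotonicity of genus under odd continuous maps. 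Hence $\eta(A)\in\Gamma_n$, yet $\sup_{\eta(A)}I\leq c_n-\varepsilon<c_n$, which contradicts $c_n=\inf_{B\in\Gamma_n}\sup_B I$. Therefore $K_{c_n}\neq\emptyset$ and $c_n$ is a critical value of $I$, completing the proof.

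The main obstacle, as noted, is the equivariant deformation lemma: all the topological bookkeeping with $\Gamma_n$ is routine once one has an odd homeomorphism that decreases $I$, but constructing an odd pseudogradient vector field and verifying that the resulting flow both preserves oddness and pushes sublevel sets down by a definite amount requires combining the $(PS)_c$ condition, the evenness of $I$, and a careful partition of unity. The boundedness-below hypothesis is used only to guarantee $c_n$ is finite; the evenness, $I(0)=0$, and hypothesis (ii) together guarantee $c_n<0$ so that the critical points produced are nontrivial.
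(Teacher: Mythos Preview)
The paper does not prove Theorem~\ref{sym mountain} at all: it is quoted verbatim from \cite{Kajikiya2005} as an abstract tool and used as a black box in the proof of Theorem~\ref{main thm}. So there is nothing to compare against on the paper's side.

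Your sketch is the standard deformation-theoretic proof and is essentially correct in outline. Two small technical remarks. First, the argument that $\eta(0)=0$ needs no ``rescaling'': oddness alone gives $\eta(0)=-\eta(0)$, hence $\eta(0)=0$. More to the point, one usually arranges the deformation to be the identity outside $I^{-1}([c_n-2\varepsilon,c_n+2\varepsilon])$; since $c_n<0$ and $I(0)=0$, for $\varepsilon$ small the origin lies outside this strip, so $\eta$ fixes a neighborhood of $0$ and the image $\eta(A)$ automatically avoids $0$. Second, the genus inequality you invoke is $\gamma(\eta(A))=\gamma(A)$ (invariance under odd homeomorphisms), not merely $\geq$; either suffices here. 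With these clarifications your argument goes through, but since the paper simply cites the result, no proof is expected in this context.
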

\noindent We first state and prove the following Lemma which gives a finite range of $\lambda$. 
\begin{lemma}
	Suppose the assumptions \eqref{assm p},\eqref{assm q}, \eqref{assm gamma} and \eqref{assm f} hold true. Then $0\leq\Lambda<\infty$, where $\Lambda=\inf\{\lambda>0:~\text{The problem \eqref{prob1} has no solution}\}$.
\end{lemma}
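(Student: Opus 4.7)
The inequality $\Lambda \geq 0$ is immediate from the definition of $\Lambda$ as an infimum taken over a subset of $(0,\infty)$. The substantive content of the lemma is the upper bound $\Lambda < \infty$, which is equivalent to exhibiting at least one value $\lambda^{\ast} > 0$ for which \eqref{prob1} admits no nonnegative weak solution.

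I plan to argue by contradiction, assuming that for every $\lambda > 0$ a nonnegative weak solution $u_{\lambda} \in W_{0}$ of \eqref{prob1} exists. The strategy is to test the weak formulation \eqref{weak main} against two different admissible functions and combine the resulting identities. First, I would take $\phi = u_{\lambda}$ (which is admissible because $u_{\lambda}^{1-\gamma(x)} \in L^{1}(\Omega)$ follows from Theorem~\ref{embed cpt} together with the boundedness of $\Omega$) to produce the energy-type identity
\[
\rho_{W_{0}}(u_{\lambda}) \;=\; \lambda \int_{\Omega} u_{\lambda}^{\,1-\gamma(x)}\,dx \;+\; \int_{\Omega} f(x,u_{\lambda})\,u_{\lambda}\,dx .
\]
Combining Lemma~\ref{lemma 2} (to convert $\rho_{W_{0}}$ into $\|u_{\lambda}\|_{W_{0}}^{p^{\pm}}$), the variable-exponent H\"older inequality \eqref{holder ineq} applied to the singular integral, and the local $p(\cdot)$-sublinearity of $f$ encoded in \eqref{assm f}(iii)--(iv), I would extract an a priori bound $\|u_{\lambda}\|_{W_{0}} \leq C\,\lambda^{\tau}$ for some explicit $\tau = \tau(p^{\pm},\gamma^{\pm},\alpha^{\pm}) > 0$.

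Second, I would test \eqref{weak main} with a fixed nonnegative, nontrivial bump $\phi \in C_{c}^{\infty}(\Omega) \subset W_{0}$. The nonlocal bilinear form on the left is then bounded above by $C(\phi)\,\rho_{W_{0}}(u_{\lambda})^{(p^{+}-1)/p^{+}}$ via \eqref{holder ineq}, and hence by $C(\phi)\,\lambda^{\tau'}$ for a suitable $\tau' > 0$ thanks to the first step. On the right, the singular term is bounded from below via Jensen's inequality with respect to the probability measure $\phi\,dx / \int \phi\,dx$, giving
\[
\lambda \int_{\Omega} \frac{\phi}{u_{\lambda}^{\gamma(x)}}\,dx \;\geq\; \lambda\, c(\phi) \Bigl(\int_{\Omega} \phi\, u_{\lambda}\,dx\Bigr)^{-\gamma^{+}} \;\geq\; \lambda\, c'(\phi)\,\|u_{\lambda}\|_{W_{0}}^{-\gamma^{+}} \;\geq\; c''(\phi)\,\lambda^{\,1-\gamma^{+}\tau},
\]
where the second inequality uses the embedding of Theorem~\ref{embed cpt} and the last uses the bound from the first step. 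Combining yields an inequality of the form $\lambda^{\,1-\gamma^{+}\tau-\tau'} \leq C_{0}$; provided the exponent is positive, this forces $\lambda \leq \Lambda_{0}$ for some universal constant $\Lambda_{0}$, and any $\lambda^{\ast} > \Lambda_{0}$ delivers the contradiction.

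The main obstacle will be the exponent arithmetic needed to guarantee $1-\gamma^{+}\tau-\tau' > 0$: this hinges on the precise interplay of $\gamma^{\pm}$, $\alpha^{\pm}$ and $p^{\pm}$ mandated by \eqref{assm gamma} and \eqref{assm f}, in particular on $\gamma^{+} < 1$, $1-\gamma^{-} < \alpha^{-}$ and $\alpha^{+} < p^{-}$; one may need to refine the choice of $\phi$ (e.g.~a first eigenfunction of the fractional $p(\cdot)$-Laplacian constructed in \cite{Azroul2019}) or iterate the bound on $\|u_\lambda\|_{W_0}$ to push the exponent into the admissible range. A secondary difficulty is absorbing the term $\int f(x,u_{\lambda})\,u_{\lambda}\,dx$ globally, since \eqref{assm f} only controls $f$ near the origin; here I would use the continuity of $f$ on compact sets together with the smallness of $u_{\lambda}$ deduced from the same energy identity, or equivalently pass through the cut-off problem \eqref{prob2} (for which $\bar f$ is uniformly bounded) and note that any solution of \eqref{prob1} whose $L^{\infty}$-norm does not exceed $l$ also solves \eqref{prob2}.
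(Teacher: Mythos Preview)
Your approach is genuinely different from the paper's, which never estimates the size of $u_{\lambda}$. Instead the paper argues that for $\lambda$ large the full right-hand side dominates $(\lambda_1+\epsilon)t^{p(x)-1}$ for every $t>0$, so any solution $u_{\lambda}$ is a \emph{supersolution} of the shifted eigenvalue problem $(-\Delta_{p(\cdot)})^{s}u=(\lambda_1+\epsilon)|u|^{p(x)-2}u$; together with the subsolution $r\phi_1$ and a monotone iteration this produces a nontrivial solution of that problem, contradicting the isolation of the first eigenvalue $\lambda_1$.

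Your energy/testing scheme has a structural defect that is already visible in the constant-exponent model $p(x,y)\equiv p$, $\gamma(x)\equiv\gamma$, $f(x,t)=|t|^{\alpha-2}t$. Testing with $u_{\lambda}$ gives $\|u_{\lambda}\|_{W_0}\le C\lambda^{\tau}$ with $\tau=1/(p-1+\gamma)$, while the H\"older bound on the bilinear form gives $\tau'=(p-1)\tau$; the strict inequality you need, $1-\gamma\tau>\tau'$, reads $1>(p-1+\gamma)\tau=1$, so the exponent $1-\gamma^{+}\tau-\tau'$ is exactly zero and no contradiction can follow. Equivalently, your Jensen step in the second test delivers the matching \emph{lower} bound $\|u_{\lambda}\|_{W_0}\ge c\lambda^{\tau}$: the two estimates are mutually compatible for every $\lambda$, and neither iterating them nor replacing $\phi$ by an eigenfunction changes the exponents. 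In the variable-exponent setting the forced use of $p^{-},\gamma^{-}$ on one side and $p^{+},\gamma^{+}$ on the other only pushes the exponent negative. There is also an independent gap: hypothesis \eqref{assm f} imposes no growth on $f$ for large $|t|$, so the estimate $\int_{\Omega} f(x,u_{\lambda})u_{\lambda}\,dx\le C\|u_{\lambda}\|_{W_0}^{\alpha^{+}}$ underpinning your first step is unavailable. Your two remedies do not work: the energy identity yields growth of $u_{\lambda}$ in $\lambda$, not smallness, and passing to the cut-off problem \eqref{prob2} would require knowing $\|u_{\lambda}\|_{\infty}\le l$ a priori, which is exactly what one cannot expect for large $\lambda$. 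The paper's eigenvalue route sidesteps both issues by extracting a qualitative obstruction rather than a quantitative size comparison.
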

\begin{proof}
It can be easily seen that $\Lambda\geq 0$. What we are left to check is whether $\Lambda<\infty$ or not. 
 This will be proved by contradiction. Let $\Lambda=\infty$.
Then it can be assumed that there exists $\lambda_n\rightarrow\infty$ such that $u_n$ is a solution to the problem \eqref{prob1}. On choosing
$\lambda_{*}>0$ such that
$$\lambda t^{-\gamma(x)}+t^{\alpha(x)}>(\lambda_1+\epsilon)t^{p(x)-1}, \forall~ t>0, \epsilon\in (0,1), \lambda>\lambda_{*}.$$
Corresponding to each choice of $\lambda_n>\lambda_{*}$ we have $u_n$, a solution of the problem \eqref{prob1}. Note that $\overline{u}=u_{\lambda}$ is a
supersolution to the following problem.
\begin{align}\label{auxen1}
\begin{split}
(-\Delta_p(x))^s u&=(\lambda_1+\epsilon)|u|^{p(x)-2}u~\text{in}~\Omega,\\
u&=0~\text{in}~\mathbb{R}^N\setminus\Omega.
\end{split}
\end{align}
Further, choose $r>0$ small enough such that $\underline{u}=r\phi_1$ is a subsolution to \eqref{auxen1}. Since,
$u_{\lambda}, \phi_1\in L^{\infty}(\Omega)$ then there exists $r>0$ sufficiently small such that $\underline{u}\leq\overline{u}$. Consider
the monotone iteration
 \begin{align}\label{auxen2}
 \begin{split}
 u_0&=r\phi_1\\
 (-\Delta_p(x))^s u_n&=(\lambda_1+\epsilon)u_{n-1}^{p(x)-1}~\text{and}~u_n>0~\text{in}~\Omega
 \end{split}
 \end{align}
Therefore, by using the weak comparison principle we have $r\phi_1\leq u_1(x)\leq u_2(x)\leq\cdots\leq u_n(x)\leq\cdots\leq u_{\lambda}(x)$, $\forall x\in\Omega$. Therefore the sequence $\{u_n\}$ is bounded in $X_0$ and hence $\{u_n\}$ has a weakly
convergent subsequence, still denoted by $\{u_n\}$, converging to $u_0$, which is a solution to the
problem \eqref{auxen1}. Since, $\epsilon>0$ is arbitrary and $\lambda_1$ is simple and isolated, hence
a contradiction is arrived at. Thus one can conclude that $\Lambda<\infty$.
\end{proof}
\noindent The next two consecutive Lemmas guarantees the validity of the hypothesis $(i)$ and $(ii)$ of the Theorem \ref{sym mountain} for the functional $\bar{I}_{\lambda}$.
\begin{lemma}\label{lemma ps}
	The functional $\bar{I}_{\lambda}$ is bounded below, even, $\bar{I}_{\lambda}(0)$=0 and satisfies $(PS)_c$ condition.
\end{lemma}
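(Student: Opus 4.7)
The first two items are immediate. $\bar{I}_\lambda(0)=0$ follows directly from \eqref{energy cutoff}. For evenness, $f(x,\cdot)$ is odd on $[-\delta,\delta]$ by \eqref{assm f}(ii) and the cut-off $\xi$ is even (it equals $1$ on $[-l,l]$, is decreasing on $[l,2l]$ and increasing on $[-2l,-l]$), so $\bar{f}(x,\cdot)=f(x,\cdot)\xi(\cdot)$ is odd and hence $\bar{F}(x,\cdot)$ is even; the remaining two terms of $\bar{I}_\lambda(u)$ are manifestly even in $u$.

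For boundedness from below, the plan is to show that the leading fractional-kinetic term dominates the other two. Using $p(x,y)\leq p^+$, the double integral is at least $(p^+)^{-1}\rho_{W_0}(u)$, and Lemma \ref{lemma 2}(ii)--(iii) bound this below by $(p^+)^{-1}\min(\|u\|_{W_0}^{p^-},\|u\|_{W_0}^{p^+})$. Because $\bar f$ has compact support in the $u$-variable, $\bar{F}$ is uniformly bounded on $\bar{\Omega}\times\mathbb{R}$, and hence $\int_\Omega \bar{F}(x,u)\d x$ is bounded by a constant depending only on $l$ and $|\Omega|$. For the singular term I would apply the H\"older inequality \eqref{holder ineq} (with $1\in L^{r'(x)}(\Omega)$ for a suitable conjugate exponent) together with the continuous embedding of Theorem \ref{embed cpt} to bound $\int_\Omega |u|^{1-\gamma(x)}\d x$ by $C\lambda\max(\|u\|_{W_0}^{1-\gamma^-},\|u\|_{W_0}^{1-\gamma^+})$. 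Since $1-\gamma^\pm<1<p^-$, the kinetic term dominates as $\|u\|_{W_0}\to\infty$, and continuity of $\bar{I}_\lambda$ on bounded sets supplies the remaining estimate, giving a uniform lower bound.

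For $(PS)_c$, take $\{u_n\}\subset W_0$ with $\bar{I}_\lambda(u_n)\to c$ and $\bar{I}_\lambda'(u_n)\to 0$ in $W_0^*$. Boundedness of $\{u_n\}$ in $W_0$ is immediate from the estimates used for boundedness below, for otherwise the kinetic term would force $\bar{I}_\lambda(u_n)\to\infty$. Reflexivity of $W_0$ and the compact embedding in Theorem \ref{embed cpt} then yield, along a subsequence, $u_n\rightharpoonup u$ in $W_0$, $u_n\to u$ in every $L^{r(x)}(\Omega)$ with $r<p_s^*$, and $u_n\to u$ a.e.\ in $\Omega$. The remaining step is to test $\bar{I}_\lambda'(u_n)-\bar{I}_\lambda'(u)$ against $u_n-u$. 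The $\bar{f}$-contribution vanishes by uniform boundedness of $\bar f$ together with the dominated convergence theorem; the singular contribution vanishes by combining the a.e.\ convergence of $u_n$ with the integrability condition $\phi/u^{\gamma(\cdot)}\in L^1(\Omega)$ for admissible test functions and the strong $L^{r(x)}$-convergence of $u_n-u$. An $(S_+)$-type monotonicity inequality for the fractional $p(\cdot)$-Laplacian then forces $\rho_{W_0}(u_n-u)\to 0$, and Lemma \ref{lemma 2}(iv) upgrades this to $\|u_n-u\|_{W_0}\to 0$.

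The main obstacle will be the singular term in $(PS)_c$: the density $\lambda|u|^{-\gamma(x)}\mathrm{sgn}(u)$ is not in $L^1_{\mathrm{loc}}$ on the zero set of the limit $u$, so one cannot simply apply dominated convergence. I would handle this by truncating the singularity (replacing $u$ by $u+\varepsilon$, say, and sending $\varepsilon\to 0^+$), using a Fatou/monotonicity argument and the cut-off-adapted weak-solution integrability to show the singular pairing goes to zero in the limit. Once this term is controlled, the $(S_+)$-structure of the fractional $p(\cdot)$-Laplacian closes the argument routinely.
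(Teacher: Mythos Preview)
Your treatment of evenness, $\bar{I}_\lambda(0)=0$, and boundedness from below is essentially the paper's argument, carried out with the same tools (boundedness of $\bar{F}$, the H\"older inequality \eqref{holder ineq}, the embedding of Theorem \ref{embed cpt}, and Lemma \ref{lemma 2}).

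For the $(PS)_c$ condition you take a genuinely different route. The paper does \emph{not} use an $(S_+)$ argument; instead it tests $\bar{I}'_\lambda(u_n)$ separately against $u_n$ and against $u$ and shows that the modulars converge, $\rho_{W_0}(u_n)\to\rho_{W_0}(u)$, whence strong convergence follows. The key device is that testing against $u_n$ turns the singular pairing into the \emph{sublinear} quantity $\int_\Omega |u_n|^{1-\gamma(x)}\d x$, and the paper proves $\int_\Omega|u_n|^{1-\gamma(x)}\d x\to\int_\Omega|u|^{1-\gamma(x)}\d x$ via the elementary subadditivity $|a|^{1-\gamma}\le |b|^{1-\gamma}+|a-b|^{1-\gamma}$ together with H\"older and the strong $L^{r(x)}$ convergence of $u_n-u$. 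No information about $|u|^{-\gamma(x)}$ for the weak limit is ever needed.

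Your scheme, by contrast, pairs $\bar{I}'_\lambda(u_n)-\bar{I}'_\lambda(u)$ with $u_n-u$, and you justify the singular piece by invoking ``the integrability condition $\phi/u^{\gamma(\cdot)}\in L^1(\Omega)$ for admissible test functions''. This is circular: that integrability is part of the \emph{definition} of $u$ being a weak solution, whereas at this stage $u$ is only the weak limit of a Palais--Smale sequence. Nothing rules out $u$ vanishing on a set of positive measure, in which case $|u|^{-\gamma(x)}$ is not locally integrable and $\bar{I}'_\lambda(u)$ is not even defined as an element of $W_0^*$. Your truncation sketch (replace $u$ by $u+\varepsilon$) does not repair this, because one still has to pass to the limit in $\int_\Omega (u+\varepsilon)^{-\gamma(x)}(u_n-u)\d x$ uniformly in $\varepsilon$, and the proposed Fatou/monotonicity step again leans on ``weak-solution integrability'' that is not available. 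If you wish to keep an $(S_+)$ approach, the correct formulation is to pair only $\bar{I}'_\lambda(u_n)$ with $u_n-u$ and show $\int_\Omega \lambda|u_n|^{-\gamma(x)}\operatorname{sgn}(u_n)(u_n-u)\d x\to 0$; splitting this as $\int_\Omega|u_n|^{1-\gamma(x)}\d x-\int_\Omega|u_n|^{-\gamma(x)}\operatorname{sgn}(u_n)\,u\d x$ one sees that the second integral is exactly the delicate object, and the paper's sublinear trick is the clean way around it.
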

\begin{proof}
	Clearly, $\bar{I}_{\lambda}(0)=0$ and even. We now prove that the functional is bounded from below. Without loss of generality let us assume $\|u\|_{W_0}>1$. Then by virtue of the H\"{o}lder's inequality \eqref{holder ineq} together with the compact embedding, Theorem \ref{embed cpt} and by using the definition of $\xi$, we get
	\begin{align*}
	\bar{I}_{\lambda}(u)&\geq\frac{1}{p^+}\iint_{\Omega\times\Omega}\frac{|u(x)-u(y)|^{p(x,y)}}{|x-y|^{N+sp(x,y)}}\d x\d y-\frac{\lambda}{1-\gamma^+}\int_{\Omega}|u|^{1-\gamma(x)}\d x-C\\
	&\geq\frac{1}{p^+}\|u\|_{W_0}^{p^-}-\frac{C'\lambda}{1-\gamma^+}\|u\|_{W_0}^{1-\gamma^+}-C,
	\end{align*}
	where $C, C'$ are positive constants. This guarantees that $\bar{I}_{\lambda}$ is coercive as well as  bounded from below. Now suppose $\{u_n\}\subset W_0$ is a $(PS)_c$ sequence for $\bar{I}_{\lambda}$. Therefore, by using the coercivity of $\bar{I}_{\lambda}$, we have that the sequence $\{u_n\}$ is bounded and hence upto a subsequence $u_n\rightharpoonup u$ in $W_0$. Hence,  for every $\phi\in W_0$, we get
	\begin{align}\label{operator conv}
	\iint_{\Omega\times\Omega}&\frac{|u_n(x)-u_n(y)|^{p(x,y)-2}(u_n(x)-u_n(y))(\phi(x)-\phi(y))}{|x-y|^{N+sp(x,y)}}\d x\d y\nonumber\\
	&\longrightarrow \iint_{\Omega\times\Omega}\frac{|u(x)-u(y)|^{p(x,y)-2}(u(x)-u(y))(\phi(x)-\phi(y))}{|x-y|^{N+sp(x,y)}}\d x\d y.
	\end{align}
	Moreover, by the Theorem \ref{embed cpt}, we have as $n\rightarrow\infty$
	\begin{align}\label{embed strong}
	u_n&\longrightarrow u ~\text{in}~ L^{r(x)}(\Omega),~\forall~1<r(x)< p_s^*(x)~\text{and}~
	u_n(x)\longrightarrow u(x) ~\text{a.e.}~ \Omega.
	\end{align}
	{\bf{Claim}:} Under the assumption \eqref{assm f}, we have
	\begin{equation}\label{f conv}
	\int_{\Omega}\bar{f}(x,u_n)u\d x\rightarrow\int_{\Omega}\bar{f}(x,u)u\d x ~\text{and}~ \int_{\Omega}\bar{f}(x,u_n)u_n\d x\rightarrow\int_{\Omega}\bar{f}(x,u)u\d x.
	\end{equation}
	By using \eqref{embed strong} and Lemma \ref{appendix a1 gen}, there exists a $g\in L^{r(x)}(\Omega)$ such that
	\begin{equation}\label{appendeix A1}
	|u_n(x)|\leq g(x) ~\text{a.e. in}~ \Omega, \forall\,n\in\mathbb{N}.
	\end{equation}
	Therefore, from \eqref{embed strong}, \eqref{appendeix A1} and by using Lebesgue dominated convergence theorem and a Brezis-Lieb type Lemma \cite{Brezis1983}, our claim is established.\\
	Now on using the H\"{o}lder's inequality \eqref{holder ineq} with $r_1(x)$ and $r_1'(x)$ such that $r_1(x)(1-\gamma(x))>1$ and then on passing the limit $n\rightarrow\infty$, we have
	\begin{align}\label{gamma est1}
	\begin{split}
	\int_{\Omega}|u_n|^{1-\gamma(x)}\d x&\leq\int_{\Omega}|u|^{1-\gamma(x)}\d x+\int_{\Omega}|u_n-u|^{1-\gamma(x)}\d x\\
	&\leq\int_{\Omega}|u|^{1-\gamma(x)}\d x+C\|u_n-u\|_{L^{r_1(x)}(\Omega)}^{1-\gamma^+}\\
	&=\int_{\Omega}|u|^{1-\gamma(x)}\d x +o(1)
	\end{split}
	\end{align}
	On proceeding in similar way, we obtain
	\begin{align}\label{gamma est2}
	\begin{split}
	\int_{\Omega}|u|^{1-\gamma}dx&\leq\int_{\Omega}|u_n|^{1-\gamma}\d x +o(1)
	\end{split}
	\end{align}
	Therefore, from \eqref{gamma est1} and \eqref{gamma est2} it follows that
	\begin{equation}\label{convergence gamma}
	\int_{\Omega}u_n^{1-\gamma}dx=\int_{\Omega}u^{1-\gamma}\d x+o(1)
	\end{equation}
	Now by using the fact that $\langle\bar{I}_{\lambda}'(u_n),u_n\rangle\rightarrow0$ as $n\rightarrow\infty$, we get
	\begin{equation}\label{convergence I tilla}
	\iint_{\Omega\times\Omega}\frac{|u_n(x)-u_n(y)|^{p(x,y)}}{|x-y|^{N+sp(x,y)}}\d x\d y-\lambda\int_{\Omega}|u_n|^{1-\gamma(x)}\d x-\int_{\Omega}\bar{f}(x, u_n)u_n\d x\rightarrow0
	\end{equation}
	Hence the estimates \eqref{f conv}, \eqref{convergence gamma} and \eqref{convergence I tilla} gives
	\begin{equation}\label{convergence u_n}
	\iint_{\Omega\times\Omega}\frac{|u(x)-u(y)|^{p(x,y)}}{|x-y|^{N+sp(x,y)}}\d x\d y\rightarrow\lambda\int_{\Omega}|u|^{1-\gamma(x)}\d x+\int_{\Omega}\bar{f}(x, u)u\d x
	\end{equation}
	Again, we have $\langle\bar{I}_{\lambda}'(u_n),u\rangle\longrightarrow0$ as $n\rightarrow\infty$ and 
	\begin{align}
	\begin{split}
	\langle\bar{I}_{\lambda}'(u_n),u\rangle&=\iint_{\Omega\times\Omega}\frac{|u_n(x)-u_n(y)|^{p(x,y)-2}(u_n(x)-u_n(y))(u(x)-u(y))}{|x-y|^{N+sp(x,y)}}\d x\d y\\
	&\hspace{2cm}-\lambda\int_{\Omega}\frac{u}{|u_n|^{\gamma(x)-1}u_n}dx-\int_{\Omega}\bar{f}(x, u_n)udx
	\end{split}
	\end{align}
	Therefore, on choosing $\phi=u$ as the test function in \eqref{operator conv} and then by using \eqref{convergence gamma}-\eqref{convergence u_n}, we obtain
	\begin{equation}\label{convergence norm u}
	\iint_{\Omega\times\Omega}\frac{|u(x)-u(y)|^{p(x,y)}}{|x-y|^{N+sp(x,y)}}\d x\d y=\lambda\int_{\Omega}|u|^{1-\gamma(x)}\d x+\int_{\Omega}\bar{f}(x, u)u\d x
	\end{equation}
	Hence, $\|u_n\|_{W_0}\rightarrow\|u\|_{W_0}$ which guarantees that the functional $\bar{I}_{\lambda}$ satisfies $(PS)_c$ condition.
\end{proof}
\noindent Define $\Gamma_n=\{A_n\subset W_0: A_n~\text{is closed, symmetric and}~ 0\notin A_n~\text{such that the genus}~ \gamma(A_n)\geq n\}.$
\begin{lemma}\label{lemma genus}
	For any $n\in\mathbb{N}$, there exists a closed, symmetric subset $A_n\subset W_0$ with $0\notin A_n$ such that $\gamma(A_n)\geq n$ and $\sup_{u\in A_n}\bar{I}_{\lambda}(u)<0.$
\end{lemma}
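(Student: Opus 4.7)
The plan is to apply the classical Clark/Kajikiya device: I would realize $A_n$ as a sphere of small radius $t_n$ inside a carefully chosen $n$-dimensional subspace $E_n \subset W_0$, so that $\gamma(A_n)=n$ comes for free and so that on $E_n$ all norms are equivalent, turning every Gagliardo/Luxemburg estimate into a pure algebraic power of $t_n$. Since $C_c^\infty(\Omega)$ is dense in $W_0$ and infinite-dimensional, I can pick $E_n$ spanned by $n$ smooth bump functions with pairwise disjoint compact supports; in particular $E_n \subset L^\infty(\Omega)$, and by homogeneity and compactness of the unit sphere $S:=\{v\in E_n:\|v\|_{W_0}=1\}$, both $\|v\|_\infty$ and the positive continuous functional $v\mapsto \int_\Omega |v|^{1-\gamma^-}\,\d x$ admit uniform two-sided bounds on $S$; in particular $\int_\Omega |v|^{1-\gamma^-}\,\d x \geq c_n>0$ on $S$.

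Writing $u=tv$ with $v\in S$, I would estimate $\bar I_\lambda(u)$ for $t\in(0,1)$ small enough that $\|u\|_\infty<l$ (so the cut-off is inactive and $\bar F=F$ on the range of $u$) and $|u(x)|<1$ a.e. Lemma \ref{lemma 2} gives
\begin{equation*}
\iint_{\Omega\times\Omega}\frac{|u(x)-u(y)|^{p(x,y)}}{p(x,y)|x-y|^{N+sp(x,y)}}\,\d x\,\d y \leq \frac{1}{p^-}\rho_{W_0}(u)\leq \frac{t^{p^-}}{p^-}.
\end{equation*}
Oddness of $f$ together with $\lim_{t\to 0}f(x,t)/t^{p^--1}=+\infty$ in \eqref{assm f}(ii) forces $F(x,\cdot)\geq 0$ near $0$, hence $-\int_\Omega F(x,u)\,\d x\leq 0$. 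For the singular term, $|u(x)|<1$ gives $|u|^{1-\gamma(x)}\geq |u|^{1-\gamma^-}$, whence
\begin{equation*}
\int_\Omega \frac{\lambda}{1-\gamma(x)}|u|^{1-\gamma(x)}\,\d x \geq \frac{\lambda c_n}{1-\gamma^-}\,t^{1-\gamma^-}.
\end{equation*}
Combining,
\begin{equation*}
\bar I_\lambda(u)\leq \frac{t^{p^-}}{p^-}-\frac{\lambda c_n}{1-\gamma^-}\,t^{1-\gamma^-}.
\end{equation*}
Since \eqref{assm f}(iii) enforces $1-\gamma^-<\alpha^-<p^-$, the negative term dominates as $t\to 0^+$; choosing $t_n$ small enough makes the right-hand side $\leq -\delta_n<0$. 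Setting $A_n=\{u\in E_n:\|u\|_{W_0}=t_n\}$, the set is closed, symmetric, does not contain $0$, and via the linear isomorphism $(E_n,\|\cdot\|_{W_0})\cong\mathbb{R}^n$ it is odd-homeomorphic to $S^{n-1}$, so $\gamma(A_n)=n$, while $\sup_{u\in A_n}\bar I_\lambda(u)\leq -\delta_n<0$.

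The hardest part, conceptually, is the variable-exponent book-keeping: the modular-to-norm passage in Lemma \ref{lemma 2} is nonlinear in scale, and $L^{1-\gamma(x)}$ (with exponent below one) does not even carry a clean Luxemburg norm. Restricting to the finite-dimensional subspace $E_n$ is precisely what sidesteps these issues, because the scaling $u=tv$ reduces every relevant integral to a pure power of $t$ and compactness of the unit sphere in $E_n$ supplies the positive constants. The only delicate technical step is arranging $\|u\|_\infty<l$, $|u|<1$ a.e.\ and $\|u\|_{W_0}<1$ simultaneously, all of which follow by taking $t_n$ small and invoking norm equivalence on the finite-dimensional $E_n$.
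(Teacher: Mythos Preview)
Your argument is correct and follows the same overall architecture as the paper---build an $n$-dimensional subspace $E_n\subset C_c^\infty(\Omega)$ from bump functions with disjoint supports, take $A_n$ to be a small sphere in $E_n$, and use norm equivalence on $E_n$ to reduce every estimate to a power of the radius---but the decisive inequality is obtained differently. The paper drives $\bar I_\lambda$ negative through the \emph{nonlinear} term: from assumption~\eqref{assm f} it extracts a lower bound $\bar F(x,u)\geq \tfrac{C}{\alpha^+}|u|^{\alpha^+}$ for $|u|\leq l$, and then compares $\tfrac{1}{p^-}\|u\|_{W_0}^{p^-}$ against $\tfrac{C}{\alpha^+}\|u\|_{L^{\alpha^+}}^{\alpha^+}$ using $\alpha^+<p^-$; the singular term is kept only as a further nonpositive contribution. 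You instead discard $F$ (using only $F\geq 0$ near $0$, which follows from oddness plus \eqref{assm f}(ii)) and let the \emph{singular} term do the work, via $1-\gamma^-<p^-$. Your route is slightly more economical in that it needs no quantitative growth information on $f$; the paper's route has the advantage that it would still give $\sup_{A_n}\bar I_\lambda<0$ even when $\lambda=0$, so it isolates the mechanism that survives in the non-singular problem. Both are valid proofs of the lemma. (A cosmetic point: your appeal to \eqref{assm f}(iii) for the inequality $1-\gamma^-<p^-$ is unnecessary, since $1-\gamma^-<1<p^-$ already holds by \eqref{assm gamma} and \eqref{assm p}.)
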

\begin{proof}
	We first construct a finite dimensional subspace of $W_0$. Observe that by using the definition of $\xi$, there exists $R>0$ such that $\bar{f}(x,t)\leq R$ and $\bar{F}(x,t)\leq R$ for all $(x,t)\in\Omega\times\mathbb{R}$. Now since $\alpha^+<p^-$, then by using the definition of $\bar{f}$, we get
	\begin{equation}\label{symm set 1}
	\bar{F}(x,u)=\xi(u)f(x,u)\geq\frac{C|u|^{\alpha^+}}{\alpha^+}~\forall\,x\in\Omega~\text{and}~|u|\leq l,
	\end{equation}
	for some $C>0$ and $0<l\leq 1$. Now for a fix $k\in\mathbb{N}$, choose, $\{\phi_1, \phi_2,\cdots,\phi_k\}\subset C_0^{\infty}(\Omega)$ such that $\phi_i\neq0$, $supp(\phi_i)\subset\Omega$ for all $i=1,2,\cdots,k$ and $supp(\phi_i)\cap supp(\phi_j)=\emptyset$ for $i\neq j$. Define,
	$W_k=span\{\phi_1, \phi_2,\cdots,\phi_k\}.$ Clearly $W_k$ is a finite subspace of $W_0$. Now, since all norms on a finite dimensional subspace are equivalent then there exist $c_k, d_k>0$ such that
	\begin{equation}\label{symm set 2}
	\|u\|_{W_0}\geq c_k\|u\|_{\infty} ~\text{and}~ \|u\|_{\alpha^+}\leq d_k\|u\|_{W_0}.
	\end{equation} 
	Finally, on choosing $\rho_k=\left\{\frac{l}{2},\frac{lc_k}{2},\left(\frac{Clp^-d_k^{\alpha^+}}{2\alpha^+}\right)^{\frac{1}{p^--\alpha^+}}\right\}$ and by using Lemma \ref{lemma 2}, \eqref{symm set 1}, \eqref{symm set 2}, we get
	\begin{align*}
	\bar{I}_{\lambda}(u)&\leq\frac{1}{p^-}\iint_{\Omega\times\Omega}\frac{|u(x)-u(y)|^{p(x,y)}}{|x-y|^{N+sp(x,y)}}\d x\d y-\frac{\lambda}{1-\gamma^-}\int_{\Omega}|u|^{1-\gamma(x)}\d x-\int_{\Omega}\bar{F}(x, u)\d x\\
	&\leq\frac{1}{p^-}\|u\|_{W_0}^{p^-}-\frac{\lambda}{1-\gamma^-}\int_{\Omega}|u|^{1-\gamma(x)}\d x-\frac{C}{\alpha^+}\int_{\Omega}|u|^{\alpha(x)}\d x\\
	&\leq\left[\frac{1}{p^-}\|u\|_{W_0}^{p^-}-\frac{Cd_k^{\alpha^+}}{\alpha^+}\|u\|_{W_0}^{\alpha^+}\d x\right]-\frac{\lambda}{1-\gamma^-}\int_{\Omega}|u|^{1-\gamma(x)}\d x\\
	&<0,
	\end{align*}
	for all $u\in W_k\cap S_{\rho_k}$, where $S_{\rho}=\{u\in W_k:\|u\|_{W_0}=\rho\}$. Set, $A_n:=\{u\in W_n: \|u\|_{W_0}=\rho_n\}$. Then $\Gamma_n\neq\phi$. Moreover, $A_n$ is symmetric, closed such that $\gamma(A_n)\geq n$ and $\sup_{u\in A_n}\bar{I}_{\lambda}(u)<0$. This completes the proof.
\end{proof}
\begin{proof}[{\bf Proof of Theorem \ref{main thm}}]
	We can see that Lemma \ref{lemma ps} and Lemma \ref{lemma genus} satisfies the hypothesis $(i)$ and $(ii)$ of Theorem \ref{sym mountain}. Therefore, we conclude that the functional $\bar{I}_{\lambda}$ has sequence of critical points $\{u_n\}$ such that $\bar{I}_{\lambda}(u_n)<0$ and $\bar{I}_{\lambda}(u_n)\rightarrow0^-$. We first prove the nonnegativity of solutions to the problem \eqref{prob1}. Consider $\Omega= \Omega^+\cup\Omega^-$, where $\Omega^+=\{x\in W_0: u_n(x)\geq0 \}$ and $\Omega^-=\{x\in W_0: u_n(x)<0 \}$. Also, let us define $u_n(x)=u_n^+-u_n^-$, where $u_n^+(x)=\max\{u_n(x), 0\}$ and $u_n^-(x)=\max\{-u_n(x), 0\}$. We will proceed by the method of contradiction. Suppose, $u_n<0$ a.e. in $\Omega$. On choosing, $\phi=u_n^-$ as the test function in \eqref{weak main} and using $(a-b)(a^--b^-)\leq-(a^--b^-)^2$, we have
	\begin{align*}
	&\int_{\Omega}\!\left(\lambda\frac{u_n^-}{|u_n|^{\gamma(x)-1}u_n}+f(x,u_n)u_n^-\right)\d x\!=\!\!\!\iint\limits_{\Omega\times\Omega}\!\!\frac{|u_n(x)-u_n(y)|^{p(x,y)-2}(u_n(x)-u_n(y))(u_n^-(x)-u_n^-(y))}{|x-y|^{N+2s}}\d x\d y\\
	&\Rightarrow\lambda\int_{\Omega}\frac{sign(u_n)u_n^-}{|u_n|^{\gamma(x)}}\d x\leq-\iint\limits_{\Omega\times\Omega}\frac{|u_n(x)-u_n(y)|^{p(x,y)-2}(u_n^-(x)-u_n^-(y))^2}{|x-y|^{N+2s}}\d x\d y\\
	&\Rightarrow\lambda\int_{\Omega^-}|u_n^-|^{1-\gamma(x)}dx\leq -\rho_{W_0}(u_n^-)<0.
	\end{align*}
	Therefore, by using Lemma \ref{lemma 2}, we can conclude that $|\Omega^-|=0$. This contradicts the assumption $u_n<0$ a.e. in $\Omega$ and hence the solutions to \eqref{prob1} are nonnegative. Now observe that $o_n(1)=\frac{1}{\alpha^{+}}\langle{\bar{I}_{\lambda}^{'}(u_n), u_n\rangle-\bar{I}_{\lambda}(u_n)}$. Moreover, from the definition of $\bar{I}_{\lambda}$, we have
	\begin{align*}
	\frac{1}{\alpha^+}\langle\tilde{I}^{'}(u_n), u_n\rangle-\tilde{I}(u_n)&=\frac{1}{\alpha^+}\left[\iint_{\Omega\times\Omega}\frac{|u_n(x)-u_n(y)|^{p(x,y)}}{|x-y|^{N+sp(x,y)}}\d x\d y-\int_{\Omega}\left(\lambda|u_n|^{1- \gamma(x)} +\bar{f}(x,u_n)u_n\right)\d x\right]\\
	&\hspace{0.5cm}-\left[\iint_{\Omega\times\Omega}\frac{|u_n(x)-u_n(y)|^{p(x,y)}}{p(x,y)|x-y|^{N+sp(x,y)}}\d x\d y-\int_{\Omega}\left(\frac{\lambda|u_n|^{1-\gamma(x)}}{1- \gamma(x)}+\bar{F}(x,u_n)\right)\d x\right]\\
	&\geq(\frac{1}{\alpha^+}-\frac{1}{p^-})\iint_{\Omega\times\Omega}\frac{|u_n(x)-u_n(y)|^{p(x,y)}}{|x-y|^{N+sp(x,y)}}\d x\d y\\
	&\hspace{0.8cm}-\lambda(\frac{1}{\alpha^+}-\frac{1}{1-\gamma^-})\int_{\Omega}|u_n|^{1-\gamma(x)}\d x+\frac{1}{\alpha^+}\int_{\Omega}(\alpha^+\bar{F}(x,u_n)-\bar{f}(x,u_n))\d x\\
	&\geq(\frac{1}{\alpha^+}-\frac{1}{p^-})\rho_{W_0}(u_n)+\lambda(\frac{1}{1-\gamma^-}-\frac{1}{\alpha^+})\int_{\Omega}|u_n|^{1-\gamma(x)}\d x\\
	&\geq(\frac{1}{\alpha^+}-\frac{1}{p^-})\rho_{W_0}(u_n).
	\end{align*}
	Therefore, $o_n(1)=\frac{1}{\alpha^{+}}\langle{\bar{I}_{\lambda}^{'}(u_n), u_n\rangle-\bar{I}_{\lambda}(u_n)}\geq(\frac{1}{\alpha^+}-\frac{1}{p^-})\rho_{W_0}(u_n)$. Hence, we get $\rho_{W_0}(u_n)\rightarrow0$ and this implies that $u_n\rightarrow0$ in $W_0.$ Thus, we can conclude that the problem \eqref{prob2} has infinitely many solutions. Finally, thanks to the Moser iteration, from Lemma \ref{bounded}, we can obtain $\|u_n\|_{L^{\infty}(\Omega)}\leq l$ as $n\rightarrow\infty$. Therefore, the problem \eqref{prob1} has infinitely many nonnegative weak solutions.
\end{proof}
\section{Boundedness of solutions to \eqref{prob1}.}
\noindent In this section we obtain a uniform bound of weak solutions to the problem \eqref{prob2} by using Moser iteration method followed by some auxilliary Lemmas. Let us begin with the following Lemma.
\begin{lemma}\label{beta convex}
	For every $\beta(x)>0$ and $p(x)\geq 1$ we have
	$$\left(\frac{1}{\beta(x)}\right)^{\frac{1}{p(x)}}\left(\frac{p(x)+\beta(x)-1}{p(x)}\right)\geq 1.$$
\end{lemma}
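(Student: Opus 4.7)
The plan is to reduce the inequality to a familiar one-variable convexity estimate and then apply Bernoulli's inequality (or equivalently, a single-variable calculus argument). Since the claim is pointwise in $x$, I would fix $x$ and simply write $\beta=\beta(x)>0$ and $p=p(x)\geq 1$, so that the statement becomes the scalar inequality
\begin{equation*}
\frac{p+\beta-1}{p\,\beta^{1/p}}\geq 1,\qquad\text{i.e.,}\qquad p+\beta-1\geq p\,\beta^{1/p}.
\end{equation*}

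First I would perform the substitution $t=\beta^{1/p}$, so that $\beta=t^{p}$ and $t>0$; the inequality becomes
\begin{equation*}
t^{p}-pt+(p-1)\geq 0 \quad \text{for all } t>0.
\end{equation*}
This is precisely Bernoulli's inequality in disguise: writing $t=1+(t-1)$ with $t-1\geq -1$, Bernoulli gives $t^{p}=(1+(t-1))^{p}\geq 1+p(t-1)=pt-(p-1)$, which is exactly what we need.

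Alternatively, and perhaps more self-contained for the paper, I would set $g(t)=t^{p}-pt+(p-1)$ and observe that $g(1)=0$ and $g'(t)=p(t^{p-1}-1)$, which is nonpositive on $(0,1]$ and nonnegative on $[1,\infty)$ because $p\geq 1$. Hence $t=1$ is a global minimum on $(0,\infty)$ and $g(t)\geq g(1)=0$, giving the desired inequality with equality iff $t=1$, i.e., iff $\beta=1$ (or $p=1$).

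There is no real obstacle here; the only subtlety is the innocuous reduction via the substitution $t=\beta^{1/p}$ which converts the mixed expression into a standard convexity inequality. I would write out the two-line calculus check (or the one-line Bernoulli invocation) and note the equality cases $\beta=1$ and $p=1$ for later use when this lemma is applied in the Moser iteration in Lemma \ref{bounded}.
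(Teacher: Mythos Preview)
Your proof is correct and takes essentially the same approach as the paper: both reduce the claim to the scalar inequality $p+\beta-1\geq p\,\beta^{1/p}$ and establish it via the convexity of $t\mapsto t^{p}$ at $t=1$ (your Bernoulli inequality $t^{p}\geq 1+p(t-1)$ with $t=\beta^{1/p}$ is exactly the tangent-line inequality the paper invokes). The only cosmetic difference is that the paper applies the convexity bound directly to $\beta^{1/p}$, whereas you first substitute $t=\beta^{1/p}$ and then argue; your added calculus alternative and the equality-case remark are not in the paper but are correct.
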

\begin{proof}
	Observe that for $p(x)=1$, the result is obvious. Therefore, we may assume $p(x)>1$. Now for $p(x)>1$ the function $t\mapsto t^{p(x)}$ is convex. Hence,
	$$\beta(x)-1\geq p(x)(\beta(x)^{\frac{1}{p(x)}}-1).$$
	Now on adding $p(x)$ to both sides we obtain
	$$p(x)+\beta(x)-1\geq p(x)\beta(x)^{\frac{1}{p(x)}}.$$
	Hence, we get
	$$\left(\frac{1}{\beta(x)}\right)^{\frac{1}{p(x)}}\left(\frac{p(x)+\beta(x)-1}{p(x)}\right)\geq 1.$$
\end{proof}
\noindent We now consider the monotone increasing function $J_{p(x)}(t):=|t|^{p(x)-2}t$ for every $1<p(x)<\infty$.
\begin{lemma}\label{l infty 1}
	Assume $1<p(x)<\infty$ and $f: \mathbb{R}\rightarrow \mathbb{R}$ to be a $C^{1}$ convex function. Then for any $\tau\geq 0$
	\begin{equation}\label{bdd est1}
	J_{p(x)}(a-b)\big[AJ_{p(x),\tau}(f'(a))-BJ_{p(x),\tau}(f'(b))\big]\geq(\tau(a-b)^{2}+(f(a)-f(b))^{2})^{\frac{p(x)-2}{2}}(f(a)-f(b))(A-B),
	\end{equation}
	for every $a, b\in \mathbb{R}$ and every $A, B\geq 0$, where $J_{p(x),\tau}(t)=(\tau+|t|^{2})^{\frac{p-2}{2}}t,~ t\in \mathbb{R}.$
\end{lemma}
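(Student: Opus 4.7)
The plan is to reduce the inequality to a monotonicity statement for the scalar function $J_{p(x),\tau}$, using a mean-value representation of $f(a)-f(b)$. Fix $x$, write $p = p(x)$, and set $u = a-b$.

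First I would apply the fundamental theorem of calculus to write
\[
f(a) - f(b) = u\,\bar\sigma, \qquad \bar\sigma := \int_0^1 f'(b + tu)\,dt.
\]
Convexity forces $f'$ to be non-decreasing, so $\bar\sigma$ lies between $f'(a)$ and $f'(b)$: more precisely $f'(b)\le\bar\sigma\le f'(a)$ when $u\ge 0$, and $f'(a)\le\bar\sigma\le f'(b)$ when $u<0$. Using $(f(a)-f(b))^2 = u^2\bar\sigma^2$, the right-hand side of \eqref{bdd est1} factors as
\[
\bigl(u^2(\tau + \bar\sigma^2)\bigr)^{(p-2)/2}\, u\,\bar\sigma\,(A-B) \;=\; J_p(a-b)\,J_{p,\tau}(\bar\sigma)\,(A - B),
\]
so the target inequality becomes
\[
J_p(a-b)\Bigl\{A\bigl[J_{p,\tau}(f'(a)) - J_{p,\tau}(\bar\sigma)\bigr] + B\bigl[J_{p,\tau}(\bar\sigma) - J_{p,\tau}(f'(b))\bigr]\Bigr\} \;\ge\; 0.
\]

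The final step is a short sign-tracking argument. A direct computation gives $\frac{d}{dt}J_{p,\tau}(t) = (\tau + t^2)^{(p-4)/2}\bigl[(p-1)t^2 + \tau\bigr]\ge 0$, so $J_{p,\tau}$ is non-decreasing for $p>1$. If $u\ge 0$, then $J_p(u)\ge 0$ and, since $f'(b)\le\bar\sigma\le f'(a)$, both bracketed terms are non-negative; if $u<0$, then $J_p(u)\le 0$ and the ordering flips to $f'(a)\le\bar\sigma\le f'(b)$, making both brackets non-positive. Either way the product is non-negative, and since $A,B\ge 0$ the conclusion follows. The case $u=0$ is trivial.

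I expect the only real obstacle to be the algebraic bookkeeping, chiefly the clean factorisation $(\tau(a-b)^2 + (f(a)-f(b))^2)^{(p-2)/2}(f(a)-f(b)) = J_p(a-b)\,J_{p,\tau}(\bar\sigma)$ and the careful sign-tracking when $u<0$; everything else reduces to the elementary monotonicity of the scalar function $J_{p,\tau}$.
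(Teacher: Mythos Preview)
Your argument is correct and follows essentially the same route as the paper. Both proofs hinge on the same two ingredients: the convexity ordering $f'(b)(a-b)\le f(a)-f(b)\le f'(a)(a-b)$ and the monotonicity of the scalar map $t\mapsto(\tau+t^{2})^{(p-2)/2}t$. The paper packages this by observing that $F(t)=\tfrac{1}{p}(\tau(a-b)^{2}+t^{2})^{p/2}$ is convex and applying the monotonicity of $F'$ at the three points $f'(b)(a-b)$, $f(a)-f(b)$, $f'(a)(a-b)$; you instead divide through by $a-b$, write $\bar\sigma=(f(a)-f(b))/(a-b)$ via the mean-value integral, and apply monotonicity of $J_{p,\tau}$ at $f'(b)$, $\bar\sigma$, $f'(a)$. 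These are the same comparison up to the rescaling $t\mapsto t/(a-b)$, so the two write-ups differ only cosmetically.
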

\begin{proof}
	The result is trivial if $a=b$. Therefore, let us assume $a\neq b$. Since the function $f$ is $C^1$ and convex then
	\begin{equation}\label{convex f}
	f(a)-f(b)\leq f'(a)(a-b)~\text{and}~ f(a)-f(b)\geq f'(b)(a-b)
	\end{equation}
	Now from the left hand side of \eqref{bdd est1}, we get
	\begin{align*}
	J_{p(x)}(a-b)&\left[AJ_{p(x),\tau}(f'(a))-BJ_{p(x),\tau}(f'(b))\right]\\
	&=\left(\tau(a-b)^{2}+(f'(a)(a-b))^{2}\right)^{\frac{p(x)-2}{2}}f'(a)(a-b)A\\
	&-\left(\tau(a-b)^{2}+(f'(b)(a-b))^{2}\right)^{\frac{p(x)-2}{2}}f'(b)(a-b)B,
	\end{align*}
	Observe that the function $F(t)=\frac{1}{p(x)}\left(\tau(a-b)^{2}+t^{2}\right)^{\frac{p(x)}{2}}$ is convex and its derivative is given by $F'(t)=\left(\tau(a-b)^{2}+t^{2}\right)^{\frac{p(x)-2}{2}}t$. Then by simplifying \eqref{convex f} and by the monotonicity of $F'$, we can obtain the desired inequality \eqref{bdd est1}.
\end{proof}
\begin{remark}
	For $\tau=0$, the estimate of the above Lemma becomes
	\begin{equation}\label{bdd est1 remark}
	J_{p(x)}(a-b)\big[AJ_{p(x)}(f'(a))-BJ_{p(x)}(f'(b))\big]\geq(f(a)-f(b))^{p(x)-2}(f(a)-f(b))(A-B),
	\end{equation}
	for every $a, b\in \mathbb{R}$ and every $A, B\geq 0$
\end{remark}
\begin{lemma}\label{l infty 2}
	Assume $1<p(x)<\infty$ and $g:\mathbb{R}\rightarrow \mathbb{R}$ to be an increasing function. Define
	$$G(t)=\int_{0}^{t}g'(\tau)^{\frac{1}{p(x)}}\d\tau, t\in \mathbb{R},$$
	then we have
	\begin{equation}\label{bdd est2}
	J_{p(x)}(a-b)(g(a)-g(b))\geq|G(a)-G(b)|^{p(x)}.
	\end{equation}
\end{lemma}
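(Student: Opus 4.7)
The plan is to reduce \eqref{bdd est2} to a single application of the scalar Hölder inequality on the integral that defines $G(a)-G(b)$. First I would reduce to the case $a>b$: since $g$ is non-decreasing, $(a-b)(g(a)-g(b))\geq 0$ and equals $|a-b|\,|g(a)-g(b)|$, so the left-hand side of \eqref{bdd est2} is symmetric under swapping $a$ and $b$; and since $g'\geq 0$ the primitive $G$ is also non-decreasing, making the right-hand side obviously symmetric as well. Hence it is enough to prove
$$(a-b)^{p(x)-1}\bigl(g(a)-g(b)\bigr)\geq\bigl(G(a)-G(b)\bigr)^{p(x)}\qquad\text{for all }a>b,$$
with the case $a=b$ being the trivial identity $0\geq 0$.

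The core step is then to write
$$G(a)-G(b)=\int_{b}^{a} g'(\tau)^{1/p(x)}\,\d\tau$$
and apply the standard scalar Hölder inequality with conjugate exponents $p(x)$ and $p(x)/(p(x)-1)$ (treating $p(x)$ as a fixed real number at the point under consideration). This gives
$$\int_{b}^{a} g'(\tau)^{1/p(x)}\cdot 1\,\d\tau\leq\left(\int_{b}^{a} g'(\tau)\,\d\tau\right)^{\!1/p(x)}\!\!\left(\int_{b}^{a}\d\tau\right)^{\!(p(x)-1)/p(x)}=\bigl(g(a)-g(b)\bigr)^{1/p(x)}(a-b)^{(p(x)-1)/p(x)}.$$
Raising both sides to the $p(x)$-th power yields precisely the reduced inequality displayed above, and the lemma follows.

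There is essentially no serious obstacle in this argument; it is in the end a one-line Hölder estimate. The only minor point requiring care is the sign bookkeeping in the reduction to $a>b$, but this is immediate from the monotonicity of $g$ (for the left-hand side) and of $G$ (for the right-hand side). One should also note that the pointwise interpretation of $p(x)$ as a constant is legitimate here because $x\in\Omega$ is fixed throughout the statement, so no variable-exponent subtleties intervene in the Hölder step.
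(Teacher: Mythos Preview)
Your proof is correct and essentially identical to the paper's: both reduce to $a>b$ and then bound $\int_b^a g'(\tau)^{1/p(x)}\,\d\tau$ by $(g(a)-g(b))^{1/p(x)}(a-b)^{(p(x)-1)/p(x)}$. The only cosmetic difference is that the paper phrases this step as Jensen's inequality applied to $t\mapsto t^{p(x)}$ (writing $g'=G'^{\,p(x)}$), whereas you phrase it as H\"older with one constant factor; these are the same inequality.
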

\begin{proof}
	Suppose $a>b$ without the loss of generality. Then we have
	\begin{align*}
	J_{p(x)}(a-b)(g(a)-g(b))&=(a-b)^{p(x)-1}\int_{b}^{a}g'(\tau)\d\tau\\
	&=(a-b)^{p(x)-1}\int_{b}^{a}G'(\tau)^{p(x)}\d\tau\\
	&\geq\left(\int_{b}^{a}G'(\tau)\d\tau\right)^{p(x)}~\text{(by using Jensen inequality)}\\
	&=|G(a)-G(b)|^{p(x)}.
	\end{align*}
\end{proof}
\begin{lemma}\label{brasco est extra}
	Suppose	$\beta(x)\geq 1$, then for every $a, b\geq 0$ we have
	\begin{equation}\label{bdd est3}
	|a-b|^{p(x)}(a^{\beta(x)-1}+b^{\beta(x)-1})\leq(\max\{1,(3-\beta(x))\})|a-b|^{p(x)-2}(a-b)(a^{\beta(x)}-b^{\beta(x)}).
	\end{equation}
\end{lemma}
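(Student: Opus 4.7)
My plan is to reduce the stated inequality to a one-dimensional claim by symmetry, then handle the two natural regimes of $\beta$ separately. Both sides of the claimed inequality are manifestly symmetric in $(a,b)$: on the right, $|a-b|^{p(x)-2}(a-b)$ and $(a^{\beta(x)}-b^{\beta(x)})$ change sign simultaneously under swapping $a\leftrightarrow b$, so their product is symmetric. I may therefore assume $a\ge b\ge 0$, reducing $|a-b|^{p(x)-2}(a-b)$ to $(a-b)^{p(x)-1}$ and $|a-b|^{p(x)}$ to $(a-b)^{p(x)}$. The case $a=b$ is trivial, so assume $a>b$; dividing through by $(a-b)^{p(x)-1}$ reduces everything to the $p(x)$-free assertion
\begin{equation*}
(a-b)\bigl(a^{\beta-1}+b^{\beta-1}\bigr)\ \le\ \max\{1,\,3-\beta\}\,\bigl(a^{\beta}-b^{\beta}\bigr),
\end{equation*}
where I write $\beta=\beta(x)$ for brevity.

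For the regime $\beta\ge 2$, I expand algebraically:
\begin{equation*}
(a-b)(a^{\beta-1}+b^{\beta-1})=\bigl(a^{\beta}-b^{\beta}\bigr)-ab\bigl(a^{\beta-2}-b^{\beta-2}\bigr).
\end{equation*}
Since $\beta-2\ge 0$ and $a\ge b\ge 0$, the correction term $ab(a^{\beta-2}-b^{\beta-2})$ is nonnegative, so the left side is bounded by $a^{\beta}-b^{\beta}$, matching the constant $\max\{1,3-\beta\}=1$.

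For the regime $1\le\beta\le 2$, the exponent $\beta-1\in[0,1]$, so $t\mapsto t^{\beta-1}$ is concave on $[0,\infty)$. The trapezoidal lower bound for the integral of a concave function then yields
\begin{equation*}
\frac{a^{\beta}-b^{\beta}}{\beta}=\int_{b}^{a}t^{\beta-1}\,\d t\ \ge\ \frac{a-b}{2}\bigl(a^{\beta-1}+b^{\beta-1}\bigr),
\end{equation*}
so $(a-b)(a^{\beta-1}+b^{\beta-1})\le \frac{2}{\beta}(a^{\beta}-b^{\beta})$. I finish with the elementary inequality $\frac{2}{\beta}\le 3-\beta$ on $[1,2]$, which is equivalent to $(\beta-1)(\beta-2)\le 0$.

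The main potential obstacle is the subrange $1\le\beta<2$: the ``drop the correction'' argument of the $\beta\ge 2$ case actually goes the wrong way there, and the constant $3-\beta$ exists precisely to absorb this loss. Identifying $a^{\beta-1}+b^{\beta-1}$ as a trapezoidal rule on the concave integrand $t^{\beta-1}$ is the key move that makes the argument transparent; after that, only the scalar comparison $\frac{2}{\beta}\le 3-\beta$ remains, and it is immediate.
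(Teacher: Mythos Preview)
Your proof is correct and follows the same overall skeleton as the paper: assume $a>b$ by symmetry, strip off the common factor to reduce to the $p$-free inequality $(a-b)(a^{\beta-1}+b^{\beta-1})\le C(a^{\beta}-b^{\beta})$, then split into the ranges $\beta\ge 2$ and $1\le\beta<2$. For $\beta\ge 2$ your algebraic identity is exactly the paper's identity $(1-t)(1+t^{\beta-1})=(1-t^{\beta})+t^{\beta-1}-t$ after the homogeneity substitution $t=b/a$. The only genuine difference is in the range $1\le\beta<2$: the paper uses the tangent-line form of concavity, $t^{\beta-1}-1\le(\beta-1)(t-1)$, followed by the comparison $1-t\le 1-t^{\beta}$, to bound the correction term by $(2-\beta)(1-t^{\beta})$; you instead use the trapezoidal-rule form of concavity to get $\int_b^a t^{\beta-1}\,\d t\ge\tfrac{a-b}{2}(a^{\beta-1}+b^{\beta-1})$ directly, and then close with the scalar inequality $2/\beta\le 3-\beta$. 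Your route is a touch more streamlined since it avoids the auxiliary step $1-t\le 1-t^{\beta}$, but both arguments are elementary uses of the concavity of $t\mapsto t^{\beta-1}$ and yield the same constant.
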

\begin{proof}
	Observe that the estimate \eqref{bdd est3} is true for $a=b$. Therefore, one can assume $a>b$. Then the estimate \eqref{bdd est3} reduces to
	$$(1-t)^{p(x)}(1+t^{\beta(x)-1})\leq C(1-t)^{p(x)-1}(1-t^{\beta(x)})~\text{for}~0\leq t<1,$$
	which implies that
	$$(1-t)(1+t^{\beta(x)-1})\leq C(1-t)(1-t^{\beta(x)})~\text{for}~0\leq t<1.$$
	Note that $(1-t)(1+t^{\beta(x)-1})=(1-t^{\beta(x)})+t^{\beta(x)-1}-t$. Since $0\leq t<1$, then for every $\beta(x)\geq2$ we have $t^{\beta(x)-1}-t\leq 0$. On the other hand for $1<\beta(x)<2$, the function $\tau\mapsto\tau^{\beta(x)-1}$ is concave. Therefore, on using the concavity we get
	$$t^{\beta(x)-1}-t=(t^{\beta(x)-1}-1)-(t-1)\leq(\beta(x)-1)(t-1)-(t-1)\leq(2-\beta(x))(1-t^{\beta(x)}).$$
	The estimate \eqref{bdd est3} holds trivially for $\beta(x)=1$. Hence we can conclude the proof for every $\beta(x)\geq1$. 
\end{proof}

\begin{lemma}\label{bounded}
	Suppose the assumptions \eqref{assm p},\eqref{assm q}, \eqref{assm gamma} and \eqref{assm f} are fulfilled. Let $u\in W_0$ be a solution to the problem \eqref{prob2}. Then $u\in L^{\infty}(\bar{\Omega}).$
\end{lemma}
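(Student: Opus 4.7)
The plan is to implement a Moser iteration adapted to the variable-exponent fractional setting, assembling the four preparatory inequalities (Lemmas \ref{beta convex}--\ref{brasco est extra}) as the pointwise tools, in the spirit of the scheme developed by Brasco--Lindgren--Schikorra for the fractional $p$-Laplacian. Since the solutions produced in Theorem \ref{main thm} are nonnegative, I may assume $u \geq 0$ throughout, so that tests against powers of $u$ make sense.

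\textbf{Setup.} For each $\beta \geq 1$ and truncation level $M > 0$, I would test \eqref{weak cutoff} against
\[
\phi_M \;=\; u\,\min\{u, M\}^{p^-(\beta - 1)},
\]
which belongs to $W_0 \cap L^\infty$ and is hence admissible. Pointwise in $(x,y)\in\Omega\times\Omega$, Lemma \ref{l infty 2} with $g(t) = t\,\min\{t, M\}^{p^-(\beta-1)}$ yields the lower bound $J_{p(x,y)}(u(x)-u(y))(g(u(x))-g(u(y))) \geq |G(u(x)) - G(u(y))|^{p(x,y)}$ for the leading kernel, where $G(t) = \int_0^t g'(\tau)^{1/p(x,y)}d\tau$. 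Lemma \ref{beta convex} absorbs the normalising constant $(p(x,y)+\beta-1)/(p(x,y)\beta^{1/p(x,y)})\geq 1$ uniformly in $(x,y)$, and Lemma \ref{brasco est extra} replaces the resulting integrand by a power of $u^{\beta}$ up to a polynomial constant in $\beta$. After passing $M \to \infty$ by monotone convergence and using the cut-off bound $|\bar f|\le R$ together with $u^{p^-(\beta-1)+1-\gamma(x)} \leq u^{p^-(\beta-1)+1} + 1$ (valid because $\gamma(x)\in(0,1)$ and $u\geq 0$), this produces a modular estimate
\[
\rho_{W_0}(u^\beta) \;\leq\; C\,\beta^{A}\!\int_\Omega \bigl(1 + u^{p^-(\beta-1)+1}\bigr)\,dx.
\]

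\textbf{Iteration and conclusion.} Applying the Sobolev embedding of Theorem \ref{embed cpt} to $w=u^\beta$ and invoking Lemma \ref{lemma 2} to convert $\rho_{W_0}(u^\beta)$ to $\|u^\beta\|_{W_0}$, I obtain a recursive estimate of the form
\[
\|u\|_{L^{\kappa q}}^{q} \;\leq\; C\,(1+q)^{B}\,\|u\|_{L^{q}}^{q}\,+\,C,
\]
where $q = p^-\beta\geq p^-$, $\kappa = p_s^{*,-}/p^{-} > 1$ with $p_s^{*,-}:=\min_{\bar\Omega}p_s^*(x)$, and $B$ is independent of $\beta$. Iterating with $q_{n+1} = \kappa q_n$ starting from $q_0 = p^-$, taking $q_n$-th roots, and noting that $\sum_n q_n^{-1}\log C(1+q_n)^{B}$ converges by comparison with $\sum_n \kappa^{-n}$, I deduce $\|u\|_{L^\infty(\bar\Omega)} \leq C\,(1+\|u\|_{L^{p^-}(\Omega)})$; the right-hand side is finite by Theorem \ref{embed cpt}.

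\textbf{Main obstacle.} The delicate point is the variable exponent $p(x,y)$ in the pointwise inequalities: the preparatory lemmas produce constants that genuinely depend on $p(x,y)$, so one must carefully bracket them by $p^-$ and $p^+$ before integrating, in order to keep the iteration constant polynomially controlled in $\beta$. A secondary subtlety is the truncation: the singular integrand $\lambda\,u^{-\gamma(x)}\phi_M$ must be handled uniformly in $M$, which works because $\phi_M$ vanishes pointwise on $\{u=0\}$ to order $u^{p^-(\beta-1)+1}$ while $\gamma(x)<1$ ensures global integrability on $\{u>0\}$, so monotone/Fatou passage is justified on both sides simultaneously (analogous to the argument in Lemma \ref{lemma ps}).
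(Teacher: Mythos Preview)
Your strategy is the same as the paper's --- a Moser iteration using the preparatory pointwise inequalities --- but the technical choices differ in two respects.  First, the paper tests with $\psi=(u_k+\delta)^\beta-\delta^\beta$ where $u_k=\min\{(u-1)^+,k\}$; the shift by $1$ forces the support into $\{u\ge 1\}$, so the singular factor $|u|^{-\gamma(x)}$ is bounded by $1$ there and the right-hand side reduces immediately to $\int |u|^{\alpha^+}(u_k+\delta)^\beta\,dx$.  You instead keep the test centred at the origin and absorb the singularity via $u^{p^-(\beta-1)+1-\gamma(x)}\le u^{p^-(\beta-1)+1}+1$; this is equally valid.  Second, the paper does not assume $u\ge 0$: it first passes to $|u|$ through the convex regularisation $g_\epsilon(t)=(\epsilon^2+t^2)^{1/2}$ and Lemma~\ref{l infty 1}, then lets $\epsilon\to 0$ by Fatou.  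Your appeal to nonnegativity is legitimate for the solutions that Theorem~\ref{main thm} actually produces, but it narrows the scope of the lemma as stated.  Incidentally, Lemma~\ref{brasco est extra} is not used in the paper's argument at all; only Lemmas~\ref{beta convex}--\ref{l infty 2} enter.

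One point to tighten: the displayed modular bound $\rho_{W_0}(u^\beta)\le C\beta^A\int_\Omega(1+u^{p^-(\beta-1)+1})\,dx$ is not literally what the test yields.  Applying Lemma~\ref{l infty 2} with $g(t)=t^{p^-(\beta-1)+1}$ produces $G(t)=c(x,y)\,t^{(p^-(\beta-1)+p(x,y))/p(x,y)}$, so the power of $u$ in the Gagliardo integrand depends on $(x,y)$ and is $\beta$ only when $p(x,y)=p^-$.  The paper's estimate (the chain starting at \eqref{bound est 4}) keeps this $(x,y)$-dependent exponent explicit, then bounds the resulting constant by its $p^+$-value before invoking the embedding.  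You correctly flag the bracketing by $p^\pm$ as the main obstacle, but the clean expression $\rho_{W_0}(u^\beta)$ overstates what comes out; you should write the intermediate step with the variable power and then pass to a fixed-exponent Sobolev norm via the worst-case bound, exactly as you describe in words.  The iteration still closes after this correction.
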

\begin{proof}
	Let us first define the smooth function $g_{\epsilon}(t)=(\epsilon^2+t^2)^{\frac{1}{2}}$ for every $\epsilon>0.$ Note that the function $g_{\epsilon}$ is convex and Lipschitz. For each $0<\psi\in C_c^{\infty}(\Omega)$, we choose $\phi=\psi |g'_{\epsilon}(u)|^{p(x,y)-2}g'_{\epsilon}(u)$ as the test function in \eqref{weak cutoff}. By choosing $a=u(x), b=u(y), A=\psi(x)$ and $B=\psi(y)$ in Lemma \ref{l infty 1} we have
	\begin{align}\label{bound est 2.0}
	\iint_{\Omega\times\Omega}&\cfrac{|g_{\epsilon}(u(x))-g_{\epsilon}(u(y))|^{p(x,y)-2}(g_{\epsilon}(u(x))-g_{\epsilon}(u(y)))(\psi(x)-\psi(y))}{|x-y|^{N+sp(x,y)}}\d x\d y\nonumber\\
	&\leq\int_\Omega\left(\left|\frac{\lambda}{|u|^{\gamma(x)-1}u}+\bar{f}(x, u)\right|\right)|g_{\epsilon}(u)|^{p(x)-1}\psi \d x
	\end{align}
	for all $0<\psi\in C_c^{\infty}(\Omega)$. Moreover, $g_{\epsilon}(t)\rightarrow|t|$ as $t\rightarrow0$ and $|g'_{\epsilon}(t)|\leq1.$ Therefore, on using Fatou's Lemma and passing the limit $\epsilon\rightarrow0$, we obtain
	\begin{align}\label{bound est 2}
	\!\!\!\!\!\int_{Q}\!\!\cfrac{||u(x)|-|u(y)||^{p(x,y)-2}(|u(x)|-|u(y)|)(\psi(x)-\psi(y))}{|x-y|^{N+sp(x,y)}}\d x\d y\leq\!\!\int_\Omega\!\!\left(\left|\frac{\lambda}{|u|^{\gamma(x)-1}u}+\bar{f}(x, u)\right|\right)\psi \d x
	\end{align}
	The inequality \eqref{bound est 2} holds true even for every $\psi\in X_0$. Now define $u_k=\min\{(u-1)^+, k\}\in X_0$ for each $k>0$. For any given $\beta>0$ and $\delta>0$ choose $\psi=(u_k+\delta)^{\beta}-\delta^{\beta}$ as the test function in \eqref{bound est 2}. This gives
	\begin{align*}
	\int_{Q}&\cfrac{||u(x)|-|u(y)||^{p(x,y)-2}(|u(x)|-|u(y)|)((u_k(x)+\delta)^{\beta}-(u_k(y)+\delta)^{\beta})}{|x-y|^{N+sp(x,y)}}\d x\d y\\
	&\leq\int_\Omega\left|\frac{\lambda}{|u|^{\gamma(x)-1}u}+\bar{f}(x, u)\right|((u_k+\delta)^{\beta}-\delta^{\beta}) \d x
	\end{align*}
	Now on applying Lemma \ref{l infty 2} to the function $h(u)=(u_k+\delta)^{\beta}$ we obtain
	\begin{align}\label{bound est 4}
	\begin{split}
	&\int_{Q}\cfrac{|((u_k(x)+\delta)^{\frac{\beta+p(x,y)-1}{p(x,y)}}
		-(u_k(y)+\delta)^{\frac{\beta+p(x,y)-1}{p(x,y)}})|^{p(x,y)}}{|x-y|^{N+sp(x,y)}}\d x\d y\\
	&\!\leq\int_{Q}\!\!\left(\cfrac{(\beta+p(x,y)-1)^{p(x,y)}}{{\beta}p(x,y)^{p(x,y)}}\right)\cfrac{||u(x)|-|u(y)||^{p(x,y)-2}(|u(x)|-|u(y)|)((u_k(x)+\delta)^{\beta}-(u_k(y)+\delta)^{\beta})}{|x-y|^{N+sp(x,y)}}\d x\d y\\
	&\leq\left(\cfrac{(\beta+p^+-1)^{p^+}}{\beta(p^+)^{p^+}}\right)\int_{Q}\cfrac{||u(x)|-|u(y)||^{p(x,y)-2}(|u(x)|-|u(y)|)((u_k(x)+\delta)^{\beta}-(u_k(y)+\delta)^{\beta})}{|x-y|^{N+sp(x,y)}}\d x\d y\\
	&\leq\left(\cfrac{(\beta+p^+-1)^{p^+}}{\beta(p^+)^{p^+}}\right)\int_{\Omega}\left(\left|\frac{\lambda}{|u|^{\gamma(x)-1}u}\right|+|\bar{f}(x,u)|\right)\left((u_k+\delta)^{\beta}-\delta^{\beta}\right) \d x\\	
	&\leq{C_1}\left(\cfrac{(\beta+p^+-1)^{p^+}}{\beta(p^+)^{p^+}}\right)\left[\int_{\Omega}\lambda|u|^{-\gamma(x)}\left((u_k+\delta)^{\beta}-\delta^{\beta}\right)+\int_{\Omega}|u|^{\alpha(x)}\left((u_k+\delta)^{\beta}-\delta^{\beta}\right) \d x\right]\\
	&={C_1}\left(\cfrac{(\beta+p^+-1)^{p^+}}{\beta(p^+)^{p^+}}\right)\left[\int_{\{u\geq1\}}\lambda|u|^{-\gamma(x)}\left((u_k+\delta)^{\beta}-\delta^{\beta}\right)+\int_{\{u\geq1\}}|u|^{\alpha(x)}\left((u_k+\delta)^{\beta}-\delta^{\beta}\right) \d x\right]\\
	&\leq{CC_1}\left(\cfrac{(\beta+p^+-1)^{p^+}}{\beta(p^+)^{p^+}}\right)\left[\int_{\{u\geq1\}}\left(1+|u|^{\alpha(x)}\right)\left((u_k+\delta)^{\beta}-\delta^{\beta}\right) \d x\right]\\
	&\leq{C'}\left(\cfrac{(\beta+p^+-1)^{p^+}}{\beta(p^+)^{p^+}}\right)\left[\int_{\{u\geq1\}}|u|^{\alpha(x)}\left((u_k+\delta)^{\beta}-\delta^{\beta}\right) \d x\right]\\
	&\leq{C'}\left(\cfrac{(\beta+p^+-1)^{p^+}}{\beta(p^+)^{p^+}}\right)\left[\int_{\Omega}|u|^{\alpha^+}\left((u_k+\delta)^{\beta}-\delta^{\beta}\right) \d x\right] \leq{C'}\left(\cfrac{(\beta+p^+-1)^{p^+}}{\beta(p^+)^{p^+}}\right)\|u\|_{r^*}^{\alpha^+}\|(u_k+\delta)^{\beta}\|_{t},
	\end{split}
	\end{align}
	where $t=\frac{r^*}{r^*-\alpha^+}$, $(t^*)<(p_s^*)^-$ and $C=\max\{1,|\lambda|\}.$ We now impose the Sobolev embedding theorem from \cite[Theorem 1.1]{Bahrouni2018a} to obtain
	\begin{align}\label{bound est 5}
	&\int_{Q}\cfrac{|((u_k(x)+\delta)^{\frac{\beta+p(x,y)-1}{p(x,y)}}
		-(u_k(y)+\delta)^{\frac{\beta+p(x,y)-1}{p(x,y)}})|^{p(x,y)}}{|x-y|^{N+sp(x,y)}}\d x\d y\geq{C}\left\|(u_k+\delta)^{\frac{\beta+p(x,y)-1}{p(x,y)}}-\delta^{\frac{\beta+p(x,y)-1}{p(x,y)}}\right\|_{r_{}^*}^{p(x,y)}
	\end{align}
	where $C>0$. The triangle inequality and $(u_k+\delta)^{\beta+p(x,y)-1}\geq\delta^{p(x,y)-1}(u_k+\delta)^{\beta}$ implies
	\begin{align}\label{bound est 6}
	\left[\int_{\Omega}\left((u_k+\delta)^{\frac{\beta+p(x,y)-1}{p(x,y)}}
	-\delta^{\frac{\beta+p(x,y)-1}{p(x,y)}}\right)^{r^*}\d x\right]^{\cfrac{p(x,y)}{r^*}}\geq\left(\frac{\delta}{2}\right)^{p(x,y)-1}&\left[\int_{\Omega}(u_k+\delta)^{\frac{r^*\beta}{p(x,y)}}\right]^{\cfrac{p(x,y)}{r^*}}\nonumber\\
	&-\delta^{\beta+p(x,y)-1}|\Omega|^{\cfrac{p(x,y)}{r^*}}.
	\end{align}
	Therefore, by using \eqref{bound est 6} in \eqref{bound est 5} and then from \eqref{bound est 4}, we have
	\begin{align}\label{bdd1}
	\!\!\!\!\!\!\left\|(u_k+\delta)^{\frac{\beta}{p(x,y)}}\right\|^{p(x,y)}_{r^*}
	\!\!\!\!\leq{C'}\!\!\left[C\left(\frac{2}{\delta}\right)^{p(x,y)-1}\!\!\left(\cfrac{(\beta+p^+-1)^{p^+}}{\beta(p^+)^{p^+}}\right)\|u\|_{r^*}^{\alpha^+}\|(u_k+\delta)^{\beta}\|_{t}+\delta^{\beta}|\Omega|^{\cfrac{p(x,y)}{r^*}}\right].
	\end{align}
	Now by using Lemma \ref{beta convex}, one can derive that
	\begin{align}\label{bdd2}
	\delta^{\beta}|\Omega|^{\cfrac{p(x,y)}{r^*}}&\leq\left(\cfrac{(\beta+p^+-1)^{p^+}}{\beta(p^+)^{p^+}}\right)|\Omega|^{\cfrac{p(x,y)}{r^*}-\cfrac{1}{t}}\left\|(u_k+\delta)^{\beta}\right\|_{t}\nonumber\\
	&\leq\left(\cfrac{(\beta+p^+-1)^{p^+}}{\beta(p^+)^{p^+}}\right)|\Omega|^{\cfrac{p^+}{r^*}-\cfrac{1}{t}}\left\|(u_k+\delta)^{\beta}\right\|_{t}
	\end{align}
	Therefore, on using \eqref{bdd2} in \eqref{bdd1}, we can deduce that
	\begin{align}\label{bound est 7}
	\left\|(u_k+\delta)^{\frac{\beta}{p^+}}\right\|^{p^+}_{r^*}
	&\leq{C'}\left[\frac{1}{\beta}\left(\cfrac{\beta+p^+-1}{p^+}\right)^{p^+}\left\|(u_k+\delta)^{\beta}\right\|_{t}\left(\frac{C\|u\|_{r^*}^{\alpha^+}}{\delta^{p-1}}+|\Omega|^{\cfrac{p^+}{r^*}-\cfrac{1}{t}} \right)\right].
	\end{align}	
	\noindent Now choose, $\delta>0$ such that $\delta^{p-1}=C\|u\|_{r^*}^{\alpha^+}\left(|\Omega|^{\frac{p^+}{r^*}-\frac{1}{t}}\right)^{-1}$ and $\beta\geq1$ with $\left(\frac{\beta+p^+-1}{p^+}\right)^{p^+}\leq\beta^{p^+}.$ Further, by setting $\eta=\cfrac{r^*}{tp^+}>1$ (such a choice for $t$ and $r^*$ is possible) and $\tau=t\beta$ we can rewrite the inequality \eqref{bound est 7} as
	\begin{align}\label{bound est 8}
	\left\|(u_k+\delta)\right\|_{\eta\tau}\leq\left(C|\Omega|^{\frac{p^+}{r^*}-\frac{1}{t}}\right)^{\frac{t}{\tau}}\left(\frac{\tau}{t}\right)^{\frac{t}{\tau}}\left\|(u_k+\delta)\right\|_{\tau}.
	\end{align}
	Set $\tau_0=t ~~\text{and}~~ \tau_{m+1}=\eta\tau_m=\eta^{m+1}t$. Then after performing $m$ iterations, the inequality \eqref{bound est 8} reduces to
	\begin{align}\label{bound est 9}
	\left\|(u_k+\delta)\right\|_{\tau_{m+1}}\leq\left(C|\Omega|^{\frac{p^+}{r^*}-\frac{1}{t}}\right)^{\left(\sum\limits_{i=0}^{m}\frac{t}{\tau_i}\right)}\left(\prod\limits_{i=0}^{m}\left(\frac{\tau_i}{t}\right)^{\frac{t}{\tau_i}}\right)^{p^+-1}\left\|(u_k+\delta)\right\|_{t}
	\end{align}
	Again by using the fact $\eta>1$, we have
	$$\sum\limits_{i=0}^{\infty}\frac{t}{\tau_i}=\sum\limits_{i=0}^{\infty}\frac{1}{\eta^i}=\frac{\eta}{\eta-1}$$ and 
	$$\prod\limits_{i=0}^{\infty}\left(\left(\frac{\tau_i}{t} \right)^{\frac{t}{\tau_i}}\right)^{p^+-1}=\eta^{\frac{\eta}{(\eta-1)^2}}.$$
	Therefore, from \eqref{bound est 9}, we get
	\begin{equation}\label{bound est 10}
	\left\|u_k\right\|_{\infty}\leq\left(C|\Omega|^{\frac{p^+}{r^*}-\frac{1}{t}}\right)^{\frac{\eta}{\eta-1}}\left(C'\eta^{\frac{\eta}{(\eta-1)^2}}\right)^{p^+-1}\left\|(u_k+\delta)\right\|_{t}
	\end{equation}
	as $m\rightarrow\infty$ Furthermore, by applying the triangle inequality together with the fact $u_k\leq(u-1)^+$ in \eqref{bound est 10}, we obtain
	\begin{equation}\label{bound est 11}
	\left\|u_k\right\|_{\infty}\leq{C}\left(\eta^{\frac{\eta}{(\eta-1)^2}}\right)^{p^+-1}\left(|\Omega|^{\frac{p^+}{r^*}-\frac{1}{t}}\right)^{\frac{\eta}{\eta-1}}\left(\left\|(u-1)^+\right\|_t+\delta|\Omega|^{\frac{1}{t}}\right)
	\end{equation}
	Finally letting $k\rightarrow\infty$ in \eqref{bound est 11}, we get
	\begin{equation}\label{bound est 12}
	\left\|(u-1)^+\right\|_{\infty}\leq C\left(\eta^{\frac{\eta}{(\eta-1)^2}}\right)^{p^+-1}\left(|\Omega|^{\frac{p^+}{r^*}-\frac{1}{t}}\right)^{\frac{\eta}{\eta-1}}\left(\left\|(u-1)^+\right\|_t+\delta|\Omega|^{\frac{1}{t}}\right)
	\end{equation}
	Hence, we conclude that $u\in L^{\infty}(\bar{\Omega}).$
\end{proof}
\section{Appendix}
\noindent We will begin with the following weak comparison principle.
\begin{theorem}\label{weak comparison}
	Suppose the assumptions \eqref{assm p},\eqref{assm q}, \eqref{assm gamma} and \eqref{assm f} are fulfilled. Let $u, v\in W_0$. If $(-\triangle_{p(x)})^{s}u-\frac{\lambda}{|u|^{\gamma(x)-1}u}\leq(-\triangle_{p(x)})^{s}v-\frac{\lambda}{|v|^{\gamma(x)-1}v}$ weakly in $\Omega$ and $u\leq v$ on $\Omega^{c}$, then $u\leq v$ in $\Omega.$
\end{theorem}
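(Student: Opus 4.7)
The plan is the standard comparison argument via testing the difference of the two weak (in)equalities against $\phi := (u-v)^{+}$. Because $u \leq v$ on $\Omega^{c}$, one has $\phi \equiv 0$ outside $\Omega$, and the lattice-type closure of $W_{0}$ under positive parts gives $\phi \in W_{0}$. Plugging $\phi$ into the difference of the two weak formulations produces the master inequality
$$
\iint_{\mathbb{R}^{2N}}\frac{J_{p}(u(x)-u(y))-J_{p}(v(x)-v(y))}{|x-y|^{N+sp(x,y)}}\bigl(\phi(x)-\phi(y)\bigr)\d x\d y \leq \int_{\Omega}\left(\frac{\lambda}{|u|^{\gamma(x)-1}u}-\frac{\lambda}{|v|^{\gamma(x)-1}v}\right)\phi\d x,
$$
where $J_{p}(t):=|t|^{p(x,y)-2}t$ is (for each fixed $(x,y)$) strictly monotone increasing.

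The second step is to show that the double integral on the left is nonnegative by a pointwise case analysis. Write $\alpha := u(x)-u(y)$ and $\beta := v(x)-v(y)$, and split $\mathbb{R}^{2N}$ into four regions according to the signs of $u(x)-v(x)$ and $u(y)-v(y)$: (i) both nonnegative, where $\phi(x)-\phi(y)=\alpha-\beta$, so the integrand equals $(J_{p}(\alpha)-J_{p}(\beta))(\alpha-\beta)\geq 0$ by monotonicity of $J_{p}$; (ii) both negative, where $\phi(x)=\phi(y)=0$ and the integrand vanishes; (iii) $u(x)\geq v(x)$ but $u(y)<v(y)$, where $\phi(x)-\phi(y)=u(x)-v(x)\geq 0$ and $\alpha-\beta=(u(x)-v(x))+(v(y)-u(y))\geq 0$, so $J_{p}(\alpha)\geq J_{p}(\beta)$ makes the product nonnegative; (iv) the symmetric case. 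Consequently the left-hand side is $\geq 0$.

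The third step exploits the singular term to force $\phi\equiv 0$. In the natural setting of the problem, $u,v>0$ in $\Omega$, so on the support $\{\phi>0\}=\{u>v\}$ one has $u>v>0$, whence strictly $|u|^{-\gamma(x)}<|v|^{-\gamma(x)}$ and the integrand on the right is strictly negative wherever $\phi>0$. Combined with step two, the master inequality forces the integral of a strictly negative quantity against the strictly positive $\phi$ over $\{u>v\}$ to be nonnegative, which is possible only if $|\{u>v\}|=0$. Hence $\phi\equiv 0$ and $u\leq v$ a.e.\ in $\Omega$.

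The main obstacle is the four-region case analysis for the nonlocal operator, especially the mixed cases (iii)--(iv), which require applying monotonicity of $J_{p}$ with the correct sign after verifying $\alpha-\beta\geq 0$. A secondary subtlety is that the singular nonlinearity $t\mapsto -1/(|t|^{\gamma(x)-1}t)$ is monotone only on each of $(-\infty,0)$ and $(0,\infty)$ separately, so any sign-agnostic version of the statement would require either the positivity hypothesis $u,v>0$ (the standard setting here) or an extra argument on the set where $u$ and $v$ have opposite signs.
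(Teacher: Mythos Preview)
Your proposal is correct and follows the same overall scheme as the paper: test the difference of the weak formulations with $\phi=(u-v)^{+}\in W_{0}$, show the nonlocal bilinear part has the favourable sign, and let the singular term close the argument.

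The difference lies in how the sign of the nonlocal term is obtained. You do a four-region case analysis on the signs of $(u-v)(x)$ and $(u-v)(y)$, using only the monotonicity of $t\mapsto |t|^{p(x,y)-2}t$. The paper instead invokes the integral identity
\[
|b|^{p-2}b-|a|^{p-2}a=(p-1)(b-a)\int_{0}^{1}|a+t(b-a)|^{p-2}\,\d t
\]
(with $a=v(x)-v(y)$, $b=u(x)-u(y)$) to factor out a nonnegative kernel $Q(x,y)$, and then uses the elementary inequality $(\psi(x)-\psi(y))(\psi^{+}(x)-\psi^{+}(y))\geq(\psi^{+}(x)-\psi^{+}(y))^{2}$ with $\psi=u-v$. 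This yields directly
\[
\langle(-\Delta_{p(\cdot)})^{s}u-(-\Delta_{p(\cdot)})^{s}v,\,(u-v)^{+}\rangle\;\geq\;\iint\frac{(p(x,y)-1)\,Q(x,y)\,(\psi^{+}(x)-\psi^{+}(y))^{2}}{|x-y|^{N+sp(x,y)}}\,\d x\,\d y\;\geq 0,
\]
so the sandwich $0\leq\cdots\leq 0$ forces the Gagliardo-type integral of $\psi^{+}$ to vanish and hence $\psi^{+}\equiv 0$. Your route is more elementary and avoids the integral identity; the paper's route buys a quantitative lower bound in terms of $\psi^{+}$ that makes the final step a one-liner and does not need the \emph{strict} negativity of the singular integrand on $\{u>v\}$. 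You are also right to flag the implicit positivity hypothesis $u,v>0$ needed for the monotonicity of $t\mapsto |t|^{-\gamma(x)}\operatorname{sgn}(t)$; the paper uses it tacitly.
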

\begin{proof}
	Since $(-\triangle_{p(x)})^{s}u-\frac{\lambda}{|u|^{\gamma(x)-1}u}\leq(-\triangle_{p(x)})^{s}v-\frac{\lambda}{|v|^{\gamma(x)-1}v}$ weakly in $\Omega$ and $u\leq v$ on $\Omega^{c}$, we have
	\begin{align}\label{compprinci}
	\langle(-\Delta_{p(x)})^sv,\phi\rangle-\int_{\Omega}\frac{\lambda\phi}{|v|^{\gamma(x)-1}v}\d x&\geq\langle(-\Delta_{p(x)})^su,\phi\rangle-\int_{\Omega}\frac{\lambda\phi}{|u|^{\gamma(x)-1}u}\d x,~\forall{\phi\geq 0\in W_0.}
	\end{align}
	Now on choosing $\phi=(u-v)^{+}$ as the test function in the inequality \eqref{compprinci}, it follows that
	{\small\begin{align}\label{compprinci1}
		&\langle(-\Delta_{p(x)})^sv-(-\Delta_{p(x)})^su,(u-v)^{+}\rangle-\int_{\Omega}\lambda(u-v)^{+}\left(\frac{1}{|v|^{\gamma(x)-1}v}-\frac{1}{|u|^{\gamma(x)-1}u}\right)\d x\geq 0.
		\end{align}}
	Denote $Q(x, y)=\int_{0}^{1}|(u(x)-u(y))+t((v(x)-v(y))-(u(x)-u(y)))|^{p(x,y)-2}dt.$	Then the identity
	\begin{align}\label{identity}
	|b|^{p(x,y)-2}b-|a|^{p(x,y)-2}a&=(p(x,y)-1)(b-a)\int_0^1|a+t(b-a)|^{p(x,y)-2}\d t
	\end{align}
	with $a=v(x)-v(y)$, $b=u(x)-u(y)$ gives
	\begin{align}
	|u(x)-u(y)|^{p(x,y)-2}(u(x)-u(y))&-|v(x)-v(y)|^{p(x,y)-2}(v(x)-v(y))\nonumber\\
	&=(p(x,y)-1)\{(u(y)-v(y))-(u(x)-v(x))\}Q(x,y).
	\end{align}
	Observe that $Q(x, y)=Q(y, x)\geq 0$ for $(x, y)\in \mathbb{R}^{N}\times \mathbb{R}^{N}$. Furthermore, $Q(x, y)=0$ implies that $v(x)=v(y)$ and $u(x)=u(y)$. Set $\psi(x)=u(x)-v(x)$. Now choose $\phi=(u-v)^{+}$ as the test function. Therefore, on using
	\begin{align}
	\psi&=u-v=(u-v)^{+}-(u-v)^{-}\nonumber
	\end{align}
	we obtain
	\begin{align}\label{negativity}
	[\psi(y)-\psi(x)][\phi(x)-\phi(y)]&=-(\psi^{+}(x)-\psi^{+}(y))^{2}\leq 0.
	\end{align}
	Now from the inequality \eqref{negativity}, it follows that
	$$0\geq\langle(-\Delta_{p(x,y)})^sv-(-\Delta_{p(x,y)})^su,(v-u)^+\rangle
	=-\int_{\mathbb{R}^{N}\times \mathbb{N}^{N}}\!\!\!\!\!\!\frac{(p(x,y)-1)Q(x,y)(\psi^+(x)-\psi^+(y))^2}{|x-y|^{N+sp(x,y)}}\d x\d y\geq 0.$$
	This shows that the Lebesgue measure of $\Omega^{+}$, i.e., $|\Omega^{+}|=0$. In other words $v\geq u$ a.e. in $\Omega$.
\end{proof}
\noindent The next Lemma is a generalization of the well known Lemma A.1 from \cite{Willem1997}.
\begin{lemma}\label{appendix a1 gen}
	Let $\Omega$ be an open subset of $\mathbb{R}^N$ and $1<p^-\leq p(x)\leq p^+<\infty$. If $v_n\rightarrow u$ in $L^{p(x)}(\Omega)$, then there exists a subsequence \{$w_n$\} of \{$v_n$\} and a function $g\in L^{p(x)}(\Omega)$ such that $w_n(x)\rightarrow u(x)$ and $|u(x)|, |w_n(x)|\leq g(x)$ almost everywhere in $\Omega$.
\end{lemma}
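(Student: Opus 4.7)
The plan is to mimic the proof of Lemma A.1 in \cite{Willem1997}, replacing the standard $L^p$-norm by the Luxemburg norm on $L^{p(x)}(\Omega)$ and using Lemma \ref{lemma 1} to convert norm estimates into modular (hence pointwise) estimates whenever needed. The key observation is that the Luxemburg norm still satisfies the triangle inequality, and $L^{p(x)}(\Omega)$ is complete, so the classical argument based on a geometrically fast subsequence goes through almost verbatim.

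\textbf{Step 1 (fast subsequence).} Since $v_n\to u$ in $L^{p(x)}(\Omega)$, I choose a strictly increasing sequence $n_k$ such that
\[
\|v_{n_k}-u\|_{p(x)}\le 2^{-k},\qquad k\ge 1,
\]
and set $w_k:=v_{n_k}$. This uses only the definition of norm convergence.

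\textbf{Step 2 (dominating function).} Consider the monotone nondecreasing sequence of nonnegative measurable functions
\[
g_N(x):=|u(x)|+\sum_{k=1}^{N}|w_k(x)-u(x)|,\qquad g(x):=\lim_{N\to\infty}g_N(x)\in[0,+\infty].
\]
By the triangle inequality for $\|\cdot\|_{p(x)}$,
\[
\|g_N\|_{p(x)}\le \|u\|_{p(x)}+\sum_{k=1}^{N}2^{-k}\le \|u\|_{p(x)}+1,
\]
uniformly in $N$. Lemma \ref{lemma 1} then converts this uniform norm bound into a uniform bound $\rho_{p(x)}(g_N)\le C$. Applying the monotone convergence theorem to $g_N^{p(x)}\uparrow g^{p(x)}$ yields $\rho_{p(x)}(g)\le C$, hence $g\in L^{p(x)}(\Omega)$ and in particular $g(x)<\infty$ almost everywhere.

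\textbf{Step 3 (conclusion).} The finiteness of $g$ a.e. forces the series $\sum_k |w_k(x)-u(x)|$ to converge for a.e.\ $x\in\Omega$, so $w_k(x)\to u(x)$ almost everywhere. The required pointwise domination is built in:
\[
|u(x)|\le g(x),\qquad |w_k(x)|\le |u(x)|+|w_k(x)-u(x)|\le g(x)\quad\text{a.e.}
\]
The only potentially delicate point is justifying the Minkowski inequality and the monotone/Fatou step in the variable-exponent setting; both are standard consequences of Lemma \ref{lemma 1} and the Banach-space structure of $L^{p(x)}(\Omega)$, so I do not expect any genuine obstacle beyond careful bookkeeping.
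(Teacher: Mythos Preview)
Your proof is correct and follows essentially the same approach as the paper's: extract a geometrically fast subsequence and build $g$ as an $\ell^1$-type sum, then use the Banach-space structure of $L^{p(x)}$ together with Lemma~\ref{lemma 1} and monotone convergence. The only cosmetic difference is that the paper centers the fast subsequence on consecutive terms (choosing $\|w_{j+1}-w_j\|_{p(x)}\le 2^{-j}$ and setting $g=|w_1|+\sum_j|w_{j+1}-w_j|$), whereas you center it on the limit $u$; both variants are standard realizations of Willem's Lemma~A.1 and yield the same conclusion.
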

\begin{proof}
	Since $v_n\rightarrow u$ in $L^{p(x)}(\Omega)$ then we can assume, upto a subsequence that $v_n(x)\rightarrow u(x)$ almost everywhere in $\Omega$. Therefore there exists a subsequence \{$w_n$\} of \{$v_n$\} such that $\|w_{j+1}-w_j\|_{p(x)}\leq\frac{1}{2^j}$ for all $j\in\mathbb{N}$. Now define, $$g(x)=|w_1(x)|+\sum_{j=1}^{\infty}\|w_{j+1}(x)-w_j(x)\|_{p(x)}.$$
	Thus $|w_n(x)|\leq g(x)$ a.e. in $\Omega$ and hence we get $|u(x)|\leq g(x)$.
\end{proof}
\section*{Acknowledgement}
\noindent The author S. Ghosh, thanks the Council of Scientific and Industrial Research (CSIR), India (09/983(0013)/2017-EMR-I) for the financial assistantship received to carry out this research work.  D. Choudhuri thanks the Science and Engineering Research Board (SERB), India for the research grant (MTR/2018/000525) to carry out the research. R.Kr. Giri acknowledges the financial support and facilities
received from the Mathematics Department, Technion - Israel Institute of Technology.

\section*{References}







\end{document}